

\documentclass[12pt]{amsart}

\let\lable\label
\def\label#1{\marginpar{#1}\lable{#1}}

\usepackage{amsmath}
\usepackage{amssymb}

\theoremstyle{plain}
\newtheorem{theorem}{Theorem}[section]
\newtheorem{lemma}{Lemma}[section]
\newtheorem{proposition}{Proposition}[section]
\newtheorem{corollary}{Corollary}[section]

\theoremstyle{definition}
\newtheorem{definition}{Definition}[section]

\newtheorem{question}{Question}[section]

\theoremstyle{remark}
\newtheorem{remark}{Remark}

\numberwithin{equation}{section}

\DeclareMathOperator{\fix}{fix}

\DeclareMathOperator{\dom}{dom}

\DeclareMathOperator{\ran}{range}

\newcommand{\tie}[1]
{\,\raise-5pt\hbox{
${\buildrel{\displaystyle{\rhd}\!\!{\lhd}}\over{
\scriptstyle
#1}}$}\,}

\newcommand{\card}[1]{\lvert #1 \rvert}

\newcommand{\forces}[2]{\Vdash_{#1} \mbox{``} #2 \mbox{''}}

\newcommand{\Poset}{{\mathbb P}}

\newcommand{\Naturals}{{\mathbb N}}

\newcommand{\betan}{\beta{\mathbb N} \setminus {\mathbb N}}
\newcommand{\Nstar}{\Naturals^*}

\newcommand{\pomegaf}{{\mathcal P}(\Naturals)/[\Naturals]^{<\aleph_0}}
\title{More on Tie-points and homeomorphism in $\Naturals^*$}

\author[A. Dow]{Alan Dow}
\address{}
\author[S. Shelah]{Saharon Shelah}
\address{Department of Mathematics, Rutgers University, Hill Center,
 Piscataway, 
 New Jersey, U.S.A. 08854-8019}
\curraddr{Institute of Mathematics\\Hebrew University\\
Givat Ram, Jerusalem 91904, Israel}
\email{shelah@math.rutgers.edu}
\date{\today}
\thanks{
Research of the first author was supported by NSF grant No. NSF-.
The research of the second  author was supported by The Israel Science
Foundation founded by the Israel Academy of Sciences and Humanities, and
by NSF grant No. NSF- . This is paper number 917 
 in the second  author's personal listing}
\keywords{automorphism, Stone-Cech, fixed points}
\subjclass{03A35}
\begin{document}
\begin{abstract}
A point $x$ is a (bow) tie-point of a space $X$ if $X\setminus \{x\}$
can be partitioned into (relatively) clopen sets each with $x$ in its
closure. We picture (and denote) this as  $X = A\tie{x} B$ where $A,
B$ are the closed sets which have a unique common accumulation point
$x$. Tie-points have appeared in the construction of non-trivial
autohomeomorphisms of $\betan=\Nstar$ (e.g. \cite{veli.oca,ShSt735})
and in the 
recent study \cite{RLevy,DTech1}
 of (precisely) 2-to-1 maps on $\Nstar$. In these cases
the tie-points have been the unique fixed point of an involution on
$\Nstar $. One application of the results in this paper is
 the consistency of there being a 
2-to-1 continuous image of $\Nstar$ which is not a homeomorph of
$\Nstar$. 
\end{abstract}
\bibliographystyle{plain}

\maketitle

\section{Introduction}

A point $x$ is a tie-point of a space $X$ if there are closed sets
$A,B$ of $X$ such that $\{x\}=A\cap B$ and $x$ is an adherent point of
both $A$ and $B$. We let $X = A \tie{x} B$ denote this relation and
say that $x$ is a tie-point as witnessed by $A,B$.  Let $A \equiv_x B$
mean that there is a homeomorphism from 
$A$ to $B$ with $x$ as a fixed point. If $X = A\tie{x} B$ and
$A\equiv_x B$, then there is an involution $F$ of $X$ (i.e. $F^2 = F$)
such that $\{x\} = \fix(F)$. In this case we will say that $x$ is a
symmetric tie-point of $X$. 

An autohomeomorphism $F$ of $\Nstar$ is said to be {\em trivial\/} if
there is a bijection $f$ between cofinite subsets of $\Naturals$ such
that $F = \beta f \restriction \Nstar$.  Since the fixed point 
set of a trivial autohomeomorphism is clopen, a symmetric tie-point
gives rise  to a non-trivial 
autohomeomorphism.

If $A$ and $B$ are arbitrary compact spaces, and if $x\in A$ and $y\in
B$ are accumulation points, then let $A\tie{x{=}y} B$ denote the
quotient space of $A\oplus B$ obtained by identifying $x$ and
$y$ and let $xy$ denote the collapsed point.
 Clearly the point $xy$ is a tie-point of this space. 

In this paper we establish the following theorem.

\begin{theorem} It is consistent\label{main} that $\Nstar$ has 
symmetric tie-points $x,y$ as
  witnessed by $A,B$ and $A',B'$ respectively such that $\Nstar$ is 
not homeomorphic to  the space $ A\tie{x=y}A'$
\end{theorem}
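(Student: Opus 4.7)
The plan is to perform a countable-support iterated proper forcing of length $\omega_2$ over a ground model of \CH{}, in the spirit of the tie-point constructions from the authors' earlier work and from Shelah--Steprans (\cite{ShSt735}). The iteration has two coupled tasks: to create the two symmetric tie-points $x$ and $y$ of $\Nstar$ together with their witnessing decompositions $A,B$ and $A',B'$ and the involutions on each, and to destroy, one candidate at a time, every possible homeomorphism $\Nstar \to A \tie{x=y} A'$ that could arise in an intermediate model.

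Since any such homeomorphism $\phi$ would send some tie-point $z$ of $\Nstar$ to the gluing point $xy$ of $A \tie{x=y} A'$, and the local structure of the latter at $xy$ on each of its two sides is entirely determined by the local structures at $x$ inside $A$ and at $y$ inside $A'$, the task reduces to finding and preserving a topological invariant $\iota$ of a tie-point-with-side $(p,C)$ that is (i) preserved under homeomorphism, (ii) compatible with the symmetries $A \equiv_x B$ and $A' \equiv_y B'$ (so that forcing in the involutions does not destroy it), and (iii) well-behaved under the $\tie{}$-gluing operation in the sense that the ordered pair $\bigl(\iota(x,A),\iota(y,A')\bigr)$ cannot equal $\bigl(\iota(z,C),\iota(z,D)\bigr)$ for any tie-point decomposition $\Nstar = C \tie{z} D$ in the final model. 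Natural candidates for $\iota$ are the $\pi$-character at the tie-point restricted to its side, the cofinal type of the neighborhood-trace filter of the tie-point on the side, or the combinatorics of the P-points accumulating at the tie-point from within the side.

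The forcing bookkeeping enumerates through $\omega_2$ stages all $\aleph_1$-generated candidate quadruples $(h, C, D, z)$ that appear in intermediate models (this is possible because the continuum in the final model will be $\aleph_2$). At each stage, for a surviving candidate, the next iterand either diagonalises against $h$ by adding a real that $h$ fails to see, or perturbs the local combinatorics near $z$ so that $\iota$ takes an inadmissible value on one of its sides. Interleaved dedicated stages force the generic ultrafilters converging to $x$ and $y$ and match them through a generic partial involution on $\Naturals$, securing the required symmetries $A \equiv_x B$ and $A' \equiv_y B'$. Preservation of $x$, $y$, their decompositions, their symmetries, and the relevant instances of $\iota$ through the countable-support iteration will be handled by the standard proper forcing preservation theorems.

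The main obstacle I expect to face is the choice of the invariant $\iota$: it must be coarse enough to survive the countable-support iteration and the addition of the forced involutions, yet fine enough to separate the glued pair $(A,A')$ from every admissible side-pair at a tie-point of $\Nstar$ in the final model. This is the same tension driving the earlier tie-point literature, but it is sharper here because the gluing step compresses two sides simultaneously; I would expect the workable invariant to be tied to the cofinal type of the neighborhood-trace filter of the tie-point inside its side, chosen so that its value on the glued space is a monotone function of the two input values and so that only a restricted range of such values can be realised by tie-points of $\Nstar$ in the extension.
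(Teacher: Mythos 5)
Your proposal diverges from the paper's proof in both setup and mechanism, and the divergence exposes a substantial gap. The paper does not build the model by a countable-support iteration over \CH{} with a bookkeeping of "candidate homeomorphisms"; it instead starts from a model $V$ of PFA and performs a \emph{single} forcing step with $\Poset^*=\Poset_2\times\Poset_2$. The two symmetric tie-points $x,y$ appear at once with their decompositions $A,B$ and $C,D$, and the whole argument that $\Nstar\not\cong A\tie{x=y}C$ takes place after the fact, inside $V[G]$. The heavy lifting is done by Lemma \ref{mainlemma} and Corollary \ref{maincorollary} (lifted from Shelah--Stepr\={a}ns): under PFA, after forcing with $\Poset^*$, any autohomeomorphism (or putative homeomorphism in the present situation) is trivial on a $P$-ideal that is ccc over fin. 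This supplies, for each $\beta\in\omega_2$, a concrete lifting $h_\beta$ of the would-be homeomorphism $F$ restricted to $Z_\beta^*$. The contradiction is then obtained combinatorially: passing through the auxiliary extensions by $\omega_2^{<\omega_1}$ and by the ccc $\omega^\omega$-bounding poset $\Poset(\mathfrak F)$ (Lemma \ref{useful-lemma}), the paper shows the family $\{h_\alpha\}$ admits no mod-finite common extension, and then it applies PFA \emph{in the ground model} to the proper poset $\omega_2^{<\omega_1}*\Poset(\mathfrak F)*\dot Q_1*\dot Q_2$, where $\dot Q_1$ "freezes" the incompatibility of the $h_\alpha$'s and $\dot Q_2$ is a $\sigma$-centered poset producing a condition of $\Poset^*$ that pins down the contradiction. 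There is no topological invariant of a "tie-point-with-side" anywhere in this argument.

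The central gap in your proposal is exactly where you flag uncertainty: the invariant $\iota$ is never constructed, and there is no evidence that any such invariant (whether $\pi$-character restricted to a side, cofinal type of the trace filter, or accumulation structure of $P$-points) actually separates the glued space from $\Nstar$. This is not a small omission; it is the whole content of the theorem. Without $\iota$, your outline is a schema for forcing constructions in general, not a proof of this statement. Moreover, even granting an invariant, your plan requires (i) that $\iota$ is preserved along a countable-support iteration of length $\omega_2$, and (ii) a triviality/lifting theorem for homeomorphisms of $\Nstar$ in the iterated extension strong enough to reduce candidate homeomorphisms to objects the bookkeeping can enumerate and kill. Neither is automatic; the second, in particular, is precisely the deep technology from \cite{step.28,step.29} that the paper invokes, and it is proved there only for products of the posets $\Poset_\ell$ over PFA, not for arbitrary CSI extensions of a \CH{} model. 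So your proposal implicitly assumes results at least as hard as, and structurally different from, the ones the paper uses. By contrast, the paper buys a clean reduction (from a homeomorphism to a family of partial bijections on $\Naturals$) at the price of working over PFA and using the specific combinatorics of $\Poset_2\times\Poset_2$ — the interval structure $2^{n+1}\setminus 2^n$ and the symmetry of the forced involution — to show no amalgamating function can be added. That combinatorics is where the "asymmetry" between $A\tie{x=y}C$ and $\Nstar$ really lives, not in a classical cardinal-invariant of the point.
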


\begin{corollary} It is consistent that there is a 2-to-1 image of
  $\Nstar$ which  is not a homeomorph of $\Nstar$.
\end{corollary}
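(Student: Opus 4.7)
The plan is to deduce the corollary directly from Theorem~\ref{main}: the 2-to-1 image will be exactly the space $A\tie{x=y}A'$ furnished by the theorem, and the source will be two disjoint copies of $\Nstar$ glued together, which is itself homeomorphic to $\Nstar$.

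First I would record the elementary fact that $\Nstar\cong\Nstar\oplus\Nstar$: partition $\Naturals = N_0\sqcup N_1$ into two infinite sets, observe that $\beta\Naturals = \beta N_0\oplus\beta N_1$ as a topological sum of two clopen pieces, and pass to remainders to get $\Nstar = \Nstar_0 \oplus \Nstar_1$ with each $\Nstar_i\cong\Nstar$. Next, apply Theorem~\ref{main} to obtain symmetric tie-points $x,y$ with witnesses $A,B$ and $A',B'$, together with homeomorphisms $h\colon A\to B$ fixing $x$ and $h'\colon A'\to B'$ fixing $y$ (these exist by the definition of symmetric tie-point). Transport the decomposition $\Nstar = A\cup B$ onto $\Nstar_0$ via a chosen homeomorphism $\varphi_0\colon\Nstar_0\to\Nstar$, and analogously transport $\Nstar = A'\cup B'$ onto $\Nstar_1$ via $\varphi_1$.

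Now define $\pi\colon\Nstar_0\oplus\Nstar_1\to A\tie{x=y}A'$ piecewise. On $\varphi_0^{-1}(A)\subseteq\Nstar_0$ let $\pi(z)=\varphi_0(z)\in A$, and on $\varphi_0^{-1}(B)\subseteq\Nstar_0$ let $\pi(z)=h^{-1}(\varphi_0(z))\in A$; symmetrically on the two pieces of $\Nstar_1$ using $h'^{-1}$ and the inclusion into $A'$. Because $h(x)=x$ and $h'(y)=y$, the piecewise definitions agree on the overlap points $\varphi_0^{-1}(x)$ and $\varphi_1^{-1}(y)$, so $\pi$ is continuous as the gluing of continuous maps on a finite closed cover. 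Surjectivity is immediate from $A\cup A'$ covering $A\tie{x=y}A'$. Counting preimages: each $z\in A\setminus\{x\}$ has the two preimages $\varphi_0^{-1}(z)$ and $\varphi_0^{-1}(h(z))$ in $\Nstar_0$, each $z'\in A'\setminus\{y\}$ has the two preimages $\varphi_1^{-1}(z')$ and $\varphi_1^{-1}(h'(z'))$ in $\Nstar_1$, and the tie-point $xy$ has precisely the two preimages $\varphi_0^{-1}(x)\in\Nstar_0$ and $\varphi_1^{-1}(y)\in\Nstar_1$. Thus $\pi$ is a genuinely 2-to-1 continuous surjection from $\Nstar$ onto $A\tie{x=y}A'$.

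By Theorem~\ref{main} the target is consistently not a homeomorph of $\Nstar$, which completes the deduction. There is essentially no obstacle in this corollary: all the substantive work lies in the forcing construction establishing Theorem~\ref{main}, and the argument above is only a bookkeeping translation of that theorem into the language of 2-to-1 images.
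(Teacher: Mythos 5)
Your proof is correct, and it is the natural deduction the paper leaves implicit (the corollary appears with no proof, as an immediate consequence of Theorem~\ref{main}). Using $\Nstar\cong\Nstar\oplus\Nstar$, folding each summand onto $A$ and $A'$ respectively via the two involutions, and observing that the collapsed point $xy$ picks up exactly two preimages (the unique fixed points $x$ and $y$ on the two sides) is exactly the standard translation of ``symmetric tie-point'' into ``2-to-1 image'' that the paper invokes.
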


One can generalize the notion of tie-point and, for a point $x\in
\Nstar$, consider how many disjoint clopen subsets of $\Nstar \setminus
\{x\}$ (each accumulating to $x$) can be found. 
 Let us say
that a tie-point $x$ of $\Nstar$ satisfies  $\tau(x)\geq n$ if
$\Nstar\setminus \{x\}$ can be partitioned into $n$ many disjoint
clopen subsets each accumulating to $x$. Naturally, we will let
$\tau(x)=n$ denote that $\tau(x)\geq n$ and $\tau(x)\not \geq n{+}1$. 
Each point $x$ of character $\omega_1$ in $\Nstar$
is a symmetric tie-point and
satisfies that $\tau(x)\geq n$ for all $n$.
We list several open questions in the
final section.

More generally one could  study the symmetry group of a point $x\in
\Nstar$: e.g. set 
$G_x$ to be  the set of autohomeomorphisms $F$ 
of $\Nstar$ that satisfy $\fix(F)=\{x\}$ and two are identified if
they are the same on some clopen neighborhood of $x$.

\begin{theorem} It is consistent\label{two} that $\Nstar$ has a
  tie-point $x$ such that $\tau(x)=2$  and such that
 with $\Nstar = A\tie{x}B$, 
 neither $A$ nor $B$ is a homeomorph of
 $\Nstar$. In addition, there are no symmetric
 tie-points.  
\end{theorem}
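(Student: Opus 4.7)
The plan is to modify the forcing construction used for Theorem~\ref{main} so that the two sides of the resulting tie-point are deliberately inequivalent to $\Nstar$, and so that no involutory pairing at any point of $\Nstar$ survives. I would start in a ground model $V_0$ of CH and, by a transfinite recursion of length $\omega_1$, build two $\subseteq^*$-towers $\langle \bar a_\alpha : \alpha<\omega_1\rangle$ and $\langle \bar b_\alpha : \alpha<\omega_1\rangle$ of subsets of $\omega$ with $\bar a_\alpha \cap \bar b_\alpha =^* \emptyset$ and $\bar a_\alpha \cup \bar b_\alpha =^* \omega$. These generate the two sides $A$ and $B$ of a tie-point $x$; to make $\tau(x)=2$ the recursion must also ensure that every infinite subset of $\omega$ is eventually absorbed, modulo the ultrafilter at $x$, into exactly one of the two towers, so that $\Nstar\setminus\{x\}$ cannot be further split into three clopen pieces all accumulating to $x$.

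Over $V_0$, iterate along $\omega_2$ with countable support a proper and $\omega^\omega$-bounding forcing $\langle \Poset_\alpha, \dot\Qposet_\alpha : \alpha<\omega_2\rangle$, using a $\diamondsuit_{\omega_2}$-style bookkeeping to enumerate three kinds of obstructions: candidate homeomorphisms $A\to \Nstar$, candidate homeomorphisms $B\to\Nstar$, and candidate involutions $F$ of $\Nstar$ with exactly one fixed point that would witness a symmetric tie-point. Each iterand $\dot\Qposet_\alpha$ is, in the spirit of the destroying forcings of \cite{ShSt735}, a forcing that generically adjoins a subset $d\subseteq \omega$ derailing the specific candidate handled at stage~$\alpha$: the image of $d$ under the candidate is forced to split in a way incompatible with the homeomorphism or involution being continuously extendible to $\Nstar$.

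In the final model, the towers $\{\bar a_\alpha\}, \{\bar b_\alpha\}$ still generate a tie-point $x$ with $\tau(x)=2$, because $\omega^\omega$-boundedness together with countable support preserves the unboundedness of the towers and their role as generators of the dual filter at $x$; meanwhile, the bookkeeping eliminates every candidate in the three categories, so neither $A$ nor $B$ is a homeomorph of $\Nstar$ and no point of $\Nstar$ is a symmetric tie-point.

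The hardest step will be the design and preservation of $\dot\Qposet_\alpha$: a generic subset $d$ intended to defeat a named homeomorphism $h : A\to\Nstar$ could easily destroy the tie-point structure or merge the two sides. The key technical lemma, parallel to what should appear in the proof of Theorem~\ref{main}, is that the destroying forcing can be defined, relative to a countable elementary submodel containing the towers, to be proper and $\omega^\omega$-bounding while neither shrinking the towers nor allowing $d$ to mediate a new clopen piece at $x$; countable support iteration then preserves both properness and the generating property of the towers throughout the $\omega_2$-iteration. Once this preservation is in hand, the theorem follows by a standard bookkeeping argument.
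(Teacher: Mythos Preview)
Your approach is fundamentally different from the paper's, and as written it has a genuine gap rather than being a complete alternative.

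The paper does not start from CH and iterate. It begins with a model of PFA and forces once with the $\aleph_1$-closed, $\aleph_2$-distributive poset $\Poset_1$ (partial functions into $2$ with the growth condition). The tie-point $x$ is the generic ultrafilter added by $\Poset_1$; the towers $\{a_\alpha\}$, $\{b_\alpha\}$ have length $\omega_2$, not $\omega_1$. The argument that $A$, $B$ are not homeomorphic to $\Nstar$, that $\tau(x)=2$, and that no involution has a unique fixed point proceeds by analyzing $\Poset_1$-names through the auxiliary collapse $\omega_2^{<\omega_1}$ and the ccc poset $\Poset(\mathfrak F)$, using a homogeneity lemma special to $\Poset_1$: because conditions are $2$-valued, one can arrange that $f\cup\{(i,0)\}$ alone already decides $\dot h(i)$ for every $i\notin\dom(f)$. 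PFA in $V$ is then applied to freeze gaps and derive contradictions from any putative homeomorphism or involution.

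Your outline instead builds $\omega_1$-towers under CH and iterates proper $\omega^\omega$-bounding forcings along $\omega_2$ to kill candidates one by one. The setup already contains an error: requiring $\bar a_\alpha\cup\bar b_\alpha=^*\omega$ is incompatible with both sequences being $\subseteq^*$-increasing and almost disjoint; you need the complements $\omega\setminus(\bar a_\alpha\cup\bar b_\alpha)$ to be infinite and to generate the ultrafilter $x$. More importantly, the entire technical content of the theorem resides in the design of $\dot\Qposet_\alpha$, which you never define. You would need a proper, $\omega^\omega$-bounding poset that (i) destroys a named homeomorphism or involution, (ii) adds no real splitting the $(\bar a_\alpha,\bar b_\alpha)$ pair into a third piece (so $\tau(x)=2$ persists), and (iii) keeps the towers generating the dual ideal through an $\omega_2$-length iteration that adds reals. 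Nothing in \cite{ShSt735} supplies such a poset; that paper \emph{adds} automorphisms rather than killing homeomorphisms, and the preservation of a fixed tie-point of character $\omega_1$ through such an iteration is a serious and unaddressed obstacle. Without a concrete $\dot\Qposet_\alpha$ and the corresponding preservation lemmas, this is a sketch of a strategy, not a proof.
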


 The following partial order $\Poset_2$,
was introduced by Velickovic in \cite{veli.oca}
to add a non-trivial automorphism of $\pomegaf$ while doing as 
little else as possible --- at least assuming PFA.
\begin{definition}
    The\label{poset} 
partial order $\Poset_2$ is defined to consist of all 1-to-1
    functions 
$f:A\rightarrow B$ where 
\begin{itemize}
\item $A\subseteq\omega$ and $B\subseteq\omega$ 
\item for all $i\in\omega$ and $n\in\omega$, $f(i)\in
  (2^{n+1}\setminus 2^n)$ if and only if 
 $i\in (2^{n+1}\setminus 2^n)$
  \item $\limsup_{n\rightarrow\omega}\card{(2^{n+1}\setminus 2^n)\setminus
A} = \omega$
and hence, by the previous condition,
$\limsup_{n\rightarrow\omega}\card{(2^{n+1}\setminus 2^n)\setminus
B} = \omega$
\end{itemize} 
The ordering on $\Poset_2$ is $\subseteq^{\ast}$.
\end{definition}

We define some trivial generalizations of $\Poset_2$. We use the
notation $\Poset_2$ to signify that this poset introduces an
involution of 
$\Nstar$ because the conditions $g=f\cup f^{-1}$ satisfy that $g^2=g$.  
In the definition of $\Poset_2$ it is possible to suppress mention of 
$A, B$ (which we do)
 and to have the poset $\Poset_2$ consist simply of the
functions $g$ (and to treat $A = \min(g) = \{ i\in \dom(g) : i<g(i)\}$
and to treat $B$ as $\max(g) = \{ i \in \dom(g) : g(i)<i\}$). 

Let $\Poset_1$ denote the poset we get if we omit mention of $f$ but
consisting only of disjoint pairs $(A,B)$, satisying the growth 
condition in Definition \ref{poset},
and extension is coordinatewise
mod finite containment.
 To be consistent with the other two posets, we may
instead represent  the elements of $\Poset_1$ as partial
functions into 2. 

More generally,
 let $\Poset_\ell$ be similar to $\Poset_2$ except that we assume
that conditions consist of functions $g$ satisfying that 
 $\{ i, g(i),g^2(i), \ldots, g^\ell(i)\}$ has precisely $\ell$ elements 
for all $i\in\dom(g)$ (and replace the intervals $2^{n+1}\setminus
2^n$ by $\ell^{n+1}\setminus \ell^n$ in the definition).

The basic properties of $\Poset_2$ as defined by Velickovic and
treated by Shelah and Steprans are also true of $\Poset_\ell$ 
for all $\ell\in\Naturals$.

In particular, for example, it is easily seen that

\begin{proposition} If\label{observe} $L\subset\Naturals$
and  $\Poset^* = \Pi_{\ell\in L} \Poset_\ell$  (with full supports) 
and $G$ is a $\Poset^*$-generic filter,
then in $V[G]$, for each $\ell\in L$, there is a tie-point $x_\ell\in
\Nstar$ 
with $\tau(x_\ell)\geq \ell$. 
\end{proposition}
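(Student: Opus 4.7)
The plan is, for each fixed $\ell_0\in L$, to reduce to a single factor and then invoke the $\ell$-fold generalization of the Velickovic construction that the paper has just asserted. Since $\Poset^*$ is a full-support product, I would factor it as $\Poset_{\ell_0}\times\Qposet_{\ell_0}$, with $\Qposet_{\ell_0}:=\prod_{\ell\in L\setminus\{\ell_0\}}\Poset_\ell$, and write the corresponding generic as $G_{\ell_0}\times G'$ so that $V[G]=V[G_{\ell_0}][G']$. This reduces the claim to two sub-tasks: (a) constructing a tie-point $x_{\ell_0}\in\Nstar$ with $\tau(x_{\ell_0})\geq\ell_0$ already in $V[G_{\ell_0}]$; and (b) verifying that $\Qposet_{\ell_0}$ over $V[G_{\ell_0}]$ preserves this tie-point structure.

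For (a), let $g_{\ell_0}$ be the generic added by $\Poset_{\ell_0}$; by the orbit condition, $g_{\ell_0}$ is a partial permutation of $\omega$ with every orbit of length exactly $\ell_0$. Picking a set of orbit representatives in $V[G_{\ell_0}]$, taking $A^0$ to be this set, and setting $A^{i+1}:=g_{\ell_0}(A^i)$ for $0\leq i<\ell_0-1$, one partitions $\dom(g_{\ell_0})$ into $\ell_0$ pieces that are cyclically permuted by $g_{\ell_0}$. Following the Velickovic--Shelah--Steprans argument of \cite{veli.oca,ShSt735} (which the paper has just asserted transfers verbatim from $\ell_0=2$ to general $\ell_0$), one produces an ultrafilter $x_{\ell_0}$ fixed by $\beta g_{\ell_0}\restriction\Nstar$ and accumulated to by each of the clopens $(A^i)^*$, so that the $\ell_0$ relatively clopen pieces of $\Nstar\setminus\{x_{\ell_0}\}$ witness $\tau(x_{\ell_0})\geq\ell_0$. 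The growth condition on the $\Poset_{\ell_0}$-conditions provides exactly the dense sets needed to make the required genericity/density argument go through with $\ell_0$ in place of $2$.

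For (b), the different factors $\Poset_\ell$ operate on their own block structures $[\ell^n,\ell^{n+1})$ and produce orbits of length $\ell\ne\ell_0$, which are orthogonal to the sector decomposition of $(\dom g_{\ell_0})^*$ coming from $\Poset_{\ell_0}$. I would need to check that forcing with $\Qposet_{\ell_0}$ neither destroys $x_{\ell_0}$ as an ultrafilter nor introduces a new ultrafilter in the common boundary of two distinct $(A^i)^*$'s other than $x_{\ell_0}$; both facts follow by the standard density/preservation arguments for these posets, applied coordinatewise and exploiting that the generic $g_\ell$ for $\ell\ne\ell_0$ is coded inside blocks essentially independent from those of $\Poset_{\ell_0}$.

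The main obstacle is the preservation in (b): the mutual genericity across the full-support product has to be set up carefully so that none of the other factors can add a new topological witness against $x_{\ell_0}$'s tie-point status. Step (a) is essentially a transcription of the established $\ell=2$ analysis, exactly as the paper signals when it states that the basic properties of $\Poset_2$ extend to all $\Poset_\ell$; and the preservation argument in (b) follows the same blueprint applied to each coordinate.
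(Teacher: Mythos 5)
The paper offers no proof of this proposition—it is dismissed as ``easily seen''—so I will evaluate your argument on its own terms.

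Your overall skeleton (factor $\Poset^* = \Poset_{\ell_0}\times\Qposet_{\ell_0}$, build the tie-point from the single factor, then show it survives the other factors) is reasonable, and step~(a) is essentially right, though the description of $g_{\ell_0}$ as a single global partial permutation is slightly too casual: the generic filter is only $\subseteq^*$-directed, so $\bigcup G_{\ell_0}$ need not be a function. What one actually has is a coherent family of conditions $g_\alpha$, and the partition of $\dom(g_\alpha)$ into $\ell_0$ orbit-position classes must be chosen \emph{canonically} (e.g.\ $A^j_\alpha = g_\alpha^{\,j}[\{\,i : i = \min(\text{orbit of }i)\,\}]$, mirroring the paper's use of $\min(g)$ for $\Poset_2$) so that the classes from different $g_\alpha\in G_{\ell_0}$ agree mod finite. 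With that fix, the $\ell_0$ unions $U_j=\bigcup_\alpha (A^j_\alpha)^*$ partition $\Nstar\setminus\{x_{\ell_0}\}$ into clopen pieces, and a standard density argument using the $\limsup$ growth clause shows each accumulates to $x_{\ell_0}$.

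Step~(b) is where the argument goes off the rails. You treat the preservation as ``the main obstacle'' and invoke mutual-genericity-style reasoning together with the claim that the blocks of $\Poset_\ell$ for $\ell\ne\ell_0$ are ``essentially independent from those of $\Poset_{\ell_0}$.'' That claim is simply false—the intervals $[\ell^n,\ell^{n+1})$ for different $\ell$ interleave in a complicated, highly correlated way—and, more importantly, block independence is not what preservation hinges on. The relevant fact, stated explicitly in the paper, is that $\Poset^*$ (and hence also the quotient $\Qposet_{\ell_0}$ over $V[G_{\ell_0}]$) is $\aleph_1$-closed and so adds no new reals. Consequently $\mathcal P(\Naturals)$ is unchanged, $\Nstar$ is literally the same compact space in $V[G_{\ell_0}]$ and in $V[G]$, and the partition $U_0,\dots,U_{\ell_0-1}$ together with the ultrafilter $x_{\ell_0}$ is preserved verbatim. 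No density or genericity argument is needed at all. If the posets did add reals, your proposed block-independence reasoning would not rescue the claim; since they do not, it is unnecessary. In short: right decomposition, right construction in the single factor, but the preservation step should be a one-line appeal to $\aleph_1$-closedness rather than a delicate mutual-genericity argument.
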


For the proof of Theorem \ref{main} we use $\Poset_2\times \Poset_2$
and for the proof of Theorem \ref{two} we use $\Poset_1$.

An ideal $\mathcal I$ on $\Naturals$ is said to be ccc over fin
\cite{Farah00}, if 
for each uncountable almost disjoint family, all but countably
many of them are in $\mathcal I$.  An ideal is a $P$-ideal if 
it is countably directed closed mod finite.

The following main result is extracted from 
\cite{step.28} and \cite{step.29} which we record without proof.

\begin{lemma}[PFA] If\label{mainlemma} $\Poset^*$ 
is a finite or countable product (repetitions allowed) 
of  posets from the set $\{ \Poset_\ell : \ell \in \Naturals\}$
and if
 $G$ is a $\Poset^*$-generic filter,
then in $V[G]$ every autohomeomorphism 
of $\Nstar$ has the property that the ideal of sets on which it is
trivial is a $P$-ideal which is ccc over fin.
\end{lemma}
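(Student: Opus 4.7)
The plan is to adapt the Shelah--Steprans analysis of autohomeomorphisms in Veli\v{c}kovi\'{c}-style generic extensions to the product $\Poset^*$. Work in $V \models \mathrm{PFA}$ and let $G$ be $\Poset^*$-generic; let $\dot F$ name an autohomeomorphism of $\Nstar$ in $V[G]$, with trivializing ideal $\mathcal{I}_F$ computed in $V[G]$. Each $\Poset_\ell$ is countably directed closed modulo finite in a sense compatible with the growth condition of Definition \ref{poset}, and this property lifts to $\Poset^*$ with full supports, so $\Poset^*$ is proper and preserves $\omega_1$. The overall scheme is to analyze a name for $F$ via a suitable countable elementary submodel $M \prec H(\theta)$ in $V$ and push PFA-style dichotomies through to $V[G]$.

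For the $P$-ideal property, suppose $\{A_n : n < \omega\} \subseteq \mathcal{I}_F$ in $V[G]$, and fix bijections $f_n$ between cofinite subsets of $A_n$ and $F(A_n)$ whose Stone extensions witness the triviality of $F$ on each $A_n^*$. Mod-finite uniqueness forces $f_n$ and $f_m$ to agree mod finite on $A_n \cap A_m$, so by a countable diagonal amalgamation, using the closure properties of $\Poset^*$-conditions, one obtains a partial bijection $f$ with $f \supseteq^* f_n$ for every $n$; its domain $A$ then lies in $\mathcal{I}_F$ and satisfies $A_n \subseteq^* A$ for every $n$.

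For ccc over fin, suppose toward contradiction that an uncountable almost-disjoint family $\{A_\alpha : \alpha < \omega_1\}$ in $V[G]$ has uncountably many $A_\alpha \notin \mathcal{I}_F$. Non-triviality of $F$ on each such $A_\alpha$ yields an infinite ``disagreement witness'' $W_\alpha$ recording pairs where $F$ necessarily differs from every candidate trivial bijection. Define an open coloring of $[\omega_1]^2$ according to how the $W_\alpha$ interact. An OCA-type dichotomy inherited from PFA in $V$, preserved by $\Poset^*$ in the precise form established in \cite{step.28} and \cite{step.29}, produces either an uncountable homogeneous set on which the $W_\alpha$ cohere into a partial action that contradicts $F$ being single-valued, or an uncountable homogeneous set forcing $F$ to already be trivial on cofinitely many $A_\alpha$, contradicting the supposition.

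The main obstacle is the underlying preservation theorem: one must verify that the OCA and related dichotomies that control autohomeomorphism names under PFA in $V$ survive forcing with $\Poset^*$ in a sufficiently strong form. This is the technical content extracted from \cite{step.28} and \cite{step.29}; the growth condition in Definition \ref{poset} is precisely what prevents the generic from introducing pathological combinatorial interactions with an arbitrary name for $F$, and the proof amounts to showing that $\Poset^*$ is PFA-preserving in the narrow sense needed for the triviality dichotomies to carry over to $V[G]$.
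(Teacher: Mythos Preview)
The paper does not actually prove this lemma: immediately before the statement it says that the result ``is extracted from \cite{step.28} and \cite{step.29} which we record without proof.'' So there is no proof in the paper to compare against; the authors are quoting a black box.

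Your proposal is not really a proof either, and you are aware of this: the final paragraph explicitly identifies the preservation of the relevant OCA/PFA dichotomies under $\Poset^*$ as ``the main obstacle'' and defers it to the same two references the paper cites. In that sense your write-up is in agreement with the paper --- both point at \cite{step.28} and \cite{step.29} for the substance --- but you should be clear that what you have written is a roadmap, not an argument. A couple of specific comments on the sketch itself: the $P$-ideal part is essentially fine and standard (mod-finite uniqueness of local trivializations plus a diagonal amalgamation; no special property of $\Poset^*$ beyond $\aleph_1$-closure is needed here). The ccc-over-fin part is where all the content lies, and your description there is too loose to be checkable: the ``disagreement witnesses'' $W_\alpha$ and the open coloring are not specified, and the dichotomy you invoke is asserted rather than derived. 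The actual argument in \cite{step.29} goes through a delicate analysis of names and a factoring of the amoeba-type poset (what the present paper packages as Lemma~\ref{useful-lemma}); simply saying ``the growth condition prevents pathological interactions'' does not substitute for that analysis. If you intend this as a citation with commentary, that matches the paper; if you intend it as an independent proof, the ccc-over-fin half has a genuine gap.
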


\begin{corollary}[PFA] If\label{maincorollary}
$\Poset^*$ is a 
finite or countable
product of posets from the set $\{\Poset_\ell : \ell\in \Naturals\}$,
and if 
 $G$ is a $\Poset^*$-generic filter,
then in $V[G]$ if $F$ is an 
autohomeomorphism 
of $\Nstar$ and $\{Z_\alpha : \alpha\in \omega_2\}$ is an increasing
mod finite chain of infinite subsets of $\Naturals$, there is an
$\alpha_0\in \omega_2$ and
a collection $\{ h_\alpha : \alpha\in \omega_2\}$ of 1-to-1 
functions such that $\dom(h_\alpha)=Z_\alpha$  and
for all $\beta\in \omega_2$ and $a\subset Z_\beta\setminus
Z_{\alpha_0}$, $F[a]=^* h_\beta[a]$. 
\end{corollary}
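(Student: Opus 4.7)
The plan is to extract the corollary directly from Lemma \ref{mainlemma}, using only the ccc-over-fin half of its conclusion. Write $\mathcal{I}_F$ for the ideal of those $Z\subseteq \Naturals$ for which there is a 1-to-1 function $h:Z\to \Naturals$ satisfying $F[a] =^\ast h[a]$ for every $a\subseteq Z$. Lemma \ref{mainlemma} tells us that $\mathcal{I}_F$ is ccc over fin (and, incidentally, a $P$-ideal, though this second property is not needed below).

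The main content is the following claim: \emph{there is $\alpha_0 \in \omega_2$ such that $Z_\beta \setminus Z_{\alpha_0} \in \mathcal{I}_F$ for every $\beta \in \omega_2$}. Granting this, the corollary is immediate: for $\beta > \alpha_0$, choose a 1-to-1 function on $Z_\beta \setminus Z_{\alpha_0}$ witnessing membership in $\mathcal{I}_F$ and extend it arbitrarily (keeping it 1-to-1) to an $h_\beta$ with $\dom(h_\beta) = Z_\beta$; for $\beta \leq \alpha_0$, the set $Z_\beta \setminus Z_{\alpha_0}$ is finite, so every 1-to-1 $h_\beta$ on $Z_\beta$ satisfies the required condition vacuously.

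To prove the claim I argue by contradiction. If no such $\alpha_0$ exists then, for each $\alpha \in \omega_2$, some $\beta > \alpha$ gives $Z_\beta \setminus Z_\alpha \notin \mathcal{I}_F$. Recursively construct $(\alpha_\xi, \beta_\xi)$ for $\xi \in \omega_2$: start with $\alpha_0 = 0$; at stage $\xi$ pick $\beta_\xi > \alpha_\xi$ with $Z_{\beta_\xi} \setminus Z_{\alpha_\xi} \notin \mathcal{I}_F$; put $\alpha_{\xi+1} = \beta_\xi + 1$; and at limit $\xi$ put $\alpha_\xi = \sup_{\eta<\xi}\alpha_\eta$, which remains below $\omega_2$ by regularity. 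Set $A'_\xi := Z_{\beta_\xi}\setminus Z_{\alpha_\xi}$. For $\xi < \eta$ we have $\beta_\xi < \alpha_\eta$, hence $Z_{\beta_\xi} \subseteq^\ast Z_{\alpha_\eta}$ and therefore $A'_\xi \subseteq^\ast Z_{\alpha_\eta}$; since $A'_\eta\cap Z_{\alpha_\eta}=\emptyset$, it follows that $A'_\xi \cap A'_\eta$ is finite. Thus $\{A'_\xi : \xi\in\omega_2\}$ is an almost disjoint family of size $\omega_2$ none of whose members lies in $\mathcal{I}_F$, contradicting the ccc-over-fin property of $\mathcal{I}_F$.

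The only real conceptual obstacle is recognizing that the negation of the desired uniform $\alpha_0$ can be packaged as a genuine almost disjoint family of $\mathcal{I}_F$-positive sets of size $\omega_2$; regularity of $\omega_2$ carries the recursion through limit stages, and inserting the gap $\alpha_{\xi+1}=\beta_\xi+1$ at each successor step is precisely what forces almost-disjointness of the resulting family.
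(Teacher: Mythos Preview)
Your proof is correct and is precisely the intended derivation; the paper states the corollary immediately after Lemma~\ref{mainlemma} without giving any argument, so your write-up supplies the standard ccc-over-fin computation that the authors left to the reader. One minor remark: running the recursion through $\omega_1$ already suffices for the contradiction (ccc over fin only asks that at most countably many members of an almost disjoint family lie outside the ideal), so invoking regularity of $\omega_2$ is more than you need, but it does no harm.
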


Each poset $\Poset^*$ as above 
is $\aleph_1$-closed and $\aleph_2$-distributive
 (see \cite[p.4226]{step.29}). 
In this paper we will restrict out study to finite products.
 The following partial order can be used to show that these
products are $\aleph_2$-distributive.

\begin{definition}
Let $\Poset^*$ be a finite product of posets from
$\{\Poset_\ell : \ell\in \Naturals\}$.
   Given $\{\vec f_{\xi} : \xi\in\mu\}=\mathfrak{F}\subset \Poset^*$
(decreasing in the ordering on $\Poset^*$), define
   $\Poset(\mathfrak{F})$  
to be the partial order consisting of all $g\in
\Poset^*$ such that there is some $\xi\in\mu$ such that
$\vec g\equiv^{\ast} \vec f_{\xi}$. The ordering on  $\Poset(\mathfrak{F})$ 
is coordinatewise
$\supseteq$ as opposed to ${}^*{\supseteq}$ in
$\Poset^*$. 
\end{definition}

\begin{corollary}[PFA] If\label{otherautos}
$\Poset^*$ is a 
finite or countable
product of posets from the set $\{\Poset_\ell : \ell\in \Naturals\}$,
and if 
 $G$ is a $\Poset^*$-generic filter,
then in $V[G]$ if $F$ is an
involution
of $\Nstar$ 
with a unique fixed point $x$, then
$x$ is a $P_{\omega_2}$-point and
$\Nstar = A\tie{x}B$ for some $A,B$ such
that $F[A]=B$.
\end{corollary}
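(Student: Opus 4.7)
My plan is to apply Lemma \ref{mainlemma} to obtain that the ideal $\mathcal I_F = \{a \subseteq \Naturals : F \restriction a^* \text{ is trivial}\}$ is a $P$-ideal ccc over fin, and then use Cor \ref{maincorollary} repeatedly to identify $\mathcal I_F$ with the dual ideal of $x$ viewed as an ultrafilter. Both conclusions of the corollary then fall out of this identification.

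\medskip

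First I would verify that $\mathcal I_F \cap x = \emptyset$. If $a \in \mathcal I_F$ is trivialized by $h : a \to a$, then $h \circ h =^* \id_a$ because $F^2 = \id$, so $h$ is a mod-finite involution whose fixed-point set is finite (otherwise $\fix(F)$ would contain an infinite clopen subset of $a^*$). Hence $F$ acts without fixed points on $(a \setminus \fix(h))^*$, forcing $x \notin a^*$, i.e., $a \notin x$.

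\medskip

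Next, I would fix in $V$ a $\subseteq^*$-increasing chain $\{Z_\alpha : \alpha < \omega_2\}$ cofinal in the dual ideal of $x$; such a chain exists because PFA in $V$ gives $\mathfrak c = \mathfrak t = \aleph_2$, and the $\aleph_2$-distributivity of $\Poset^*$ puts the relevant $\omega_1$-approximations back into $V$. Applying Cor \ref{maincorollary} to this chain yields $\alpha_0 < \omega_2$ and coherent 1-to-1 functions $h_\alpha$ with $\dom(h_\alpha) = Z_\alpha$ such that $F[a] =^* h_\beta[a]$ whenever $a \subseteq Z_\beta \setminus Z_{\alpha_0}$; in particular $Z_\beta \setminus Z_{\alpha_0} \in \mathcal I_F$ for each $\beta > \alpha_0$. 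A second application of Cor \ref{maincorollary} to a chain in which $Z_{\alpha_0}$ is embedded past its own threshold, stitched together via the $P$-ideal structure, shows $Z_{\alpha_0} \in \mathcal I_F$ as well. By cofinality of the original chain, every set not in $x$ is almost contained in some $Z_\beta \in \mathcal I_F$, and hence $\mathcal I_F$ equals the dual ideal of $x$.

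\medskip

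This identification gives both conclusions at once. For the $P_{\omega_2}$-point property: an $\omega_1$-chain $\{X_\xi\}$ in $x$ yields, via complementation, an $\omega_1$-chain in $\mathcal I_F$; by $\aleph_2$-distributivity this chain lies in $V$, and PFA (through $\mathfrak t = \aleph_2$) provides a mod-finite upper bound $Y$ with $\Naturals \setminus Y$ infinite; a further refinement, using the coherent $h_\alpha$'s to extend triviality to $Y$, places $Y$ into $\mathcal I_F$, so $\Naturals \setminus Y \in x$ is the desired pseudo-intersection. For the tie-point structure: each $h_\beta$ (for $\beta > \alpha_0$) is mod-finite a fixed-point-free involution, so canonically partitions $Z_\beta \setminus Z_{\alpha_0}$ into halves $A^0_\beta \sqcup A^1_\beta$ with $h_\beta[A^0_\beta] =^* A^1_\beta$, both infinite. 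Coherent choices extend through $\alpha < \omega_2$ by transfinite recursion (using that $h_\alpha \subseteq^* h_\beta$ for $\alpha < \beta$). Setting $A = \bigcup_\beta (A^0_\beta)^* \cup \{x\}$ and $B = \bigcup_\beta (A^1_\beta)^* \cup \{x\}$, one checks directly that $A \cap B = \{x\}$, $A \cup B = \Nstar$, both halves of each $Z_\beta$ being infinite makes $x$ accumulate to $A \setminus \{x\}$ and $B \setminus \{x\}$, and $F[A] = B$ since $F$ agrees with $h_\beta$ on the appropriate clopen pieces.

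\medskip

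The main obstacle is the bootstrap that $Z_{\alpha_0} \in \mathcal I_F$: a single application of Cor \ref{maincorollary} gives no information about $F \restriction Z_{\alpha_0}^*$, so we must reapply Cor \ref{maincorollary} with a different cofinal chain in which $Z_{\alpha_0}$ appears beyond the new threshold, and then merge the two trivializations using that $\mathcal I_F$ is a $P$-ideal and closed under finite unions. A secondary subtlety is ensuring that the $\omega_2$-length coherent choice of partitions $\{A^i_\alpha\}$ does not break at limit stages of uncountable cofinality; this is handled via $\aleph_2$-distributivity, which forces the relevant coherence conditions into the ground model where they can be maintained by standard PFA-style bookkeeping.
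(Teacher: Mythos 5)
Your proposal correctly identifies the relevant tools --- the lifting via Lemma \ref{mainlemma} and Corollary \ref{maincorollary}, the coherent trivializing involutions $h_\alpha$, the partition via $\min(h_\alpha)$, and the $\aleph_2$-distributivity bringing $\omega_1$-sized data back to $V$ --- and your construction of the tie-point structure from the halves $A^0_\beta, A^1_\beta$ is essentially the paper's (the paper also proves the ordinary $P$-point property first by a direct partition-and-image argument which you omit, but that part is routine). However, the $P_{\omega_2}$-point claim is where the real difficulty lives, and your proposal does not engage with it.

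The gap is in the sentence ``PFA (through $\mathfrak t = \aleph_2$) provides a mod-finite upper bound $Y$ with $\Naturals\setminus Y$ infinite; a further refinement, using the coherent $h_\alpha$'s to extend triviality to $Y$, places $Y$ into $\mathcal I_F$.'' Two things go wrong. First, it is not clear you may assume a cofinal $\subseteq^*$-chain $\{Z_\alpha : \alpha<\omega_2\}$ in the dual ideal exists: the existence of such a chain is essentially equivalent to $x$ being a $P_{\omega_2}$-point, which is what you are trying to prove, so starting from such a chain is circular. Second, and more centrally: given an $\omega_1$-chain $\{Z_\alpha\}$ of sets not in $x$ (i.e., in $\mathcal Z$), $\mathfrak{t}=\aleph_2$ in $V$ gives a $\subseteq^*$-upper bound $Y\in V$ for the chain, but gives no control whatsoever over whether $Y$ is in $x$ or whether $F\restriction Y^*$ is trivial. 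There is no reason ``coherent $h_\alpha$'s extend triviality to $Y$'': $Y$ may be strictly larger than every $Z_\alpha$ and $F$ may be wildly nontrivial on $Y^* \setminus \bigcup Z_\alpha^*$. The paper handles exactly this obstruction: it passes to the partition pairs $(a_\alpha, b_\alpha) = (\min(h_\alpha), Z_\alpha\setminus\min(h_\alpha))$, shows that $x$ accumulating to $\bigcup Z_\alpha^*$ forces $x$ to accumulate to both $\bigcup a_\alpha^*$ and $\bigcup b_\alpha^*$ (via $F$-symmetry), reduces the claim to showing $\{(a_\alpha, b_\alpha)\}$ is not a gap in $V$, and then --- this is the essential step missing from your proposal --- rules out the gap case by a preparatory forcing with $\omega_1$-many almost-disjoint Cohen reals followed by a gap-freezing poset, using Lemma \ref{mainlemma} to find a Cohen real $C_\xi$ on which $F$ is trivial via some $h_Z$ whose $\min$-part would have to split the frozen Hausdorff gap, a contradiction. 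Without some version of this gap/freezing argument, there is no way to force the upper bound to avoid $x$, so the $P_{\omega_2}$-point conclusion remains unproved.
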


\begin{proof} 
We may assume that $F$ also denotes an arbitrary lifting of $F$ 
to $[\Naturals]^\omega$ in the sense that for each 
$Y\subset\Naturals$, $(F[Y])^* = F[Y^*]$. 
Let $\mathcal Z_x = [\Naturals]^\omega \setminus x$ (the dual ideal to
$x$). For each $Z\in \mathcal Z_x$, $F[Z]$ is also in $\mathcal Z$ and
$F[Z\cup F[Z]] =^* Z\cup F[Z]$. So let us now assume that 
$\mathcal Z$ denotes those $Z\in \mathcal Z_x$ such that $Z=^*F[Z]$.
Given $Z\in \mathcal Z$, since $\fix(F)\cap Z^*=\emptyset$,
there is a collection $\mathcal Y\subset [Z]^\omega$
such that $F[Y]\cap Y=^*\emptyset$ for each $Y\in \mathcal Y$,
and such that $Z^*$ is covered by $\{ Y^* : Y\in \mathcal Y\}$. 
By compactness, we may assume that $\mathcal Y = \{ Y_0, \ldots,
Y_n\}$ is finite. 
 Set $Z_0 = Y_0\cup F[Y_0]$.
By induction, replace $Y_k$ by $Y_k\setminus \bigcup_{j<k}Z_j
$ and define $Z_k =Y_k\cup F[Y_k]$.  Therefore $Y_Z = \bigcup_k Y_k$
satisfies that $Y_Z\cap F[Y_Z]=^* \emptyset$ and $Z=Y_Z\cup
F[Y_Z]$. This shows that for each $Z\in \mathcal Z$ 
 there is a partition of $Z=Z^0\cup Z^1$ such that
$F[Z^0]=^* Z^1$.  It now follows that $x$ is a $P$-point, for if $\{
Z_n = Z_n^0\cup Z_n^1 : n\in \Naturals\}\subset \mathcal Z
$ are pairwise disjoint, then $x\notin \overline{ \bigcup_n Z_n^*}$
since $F[\overline{\bigcup_n (Z_n^0)^*}]=
\overline{\bigcup_n (Z_n^1)^*}$ and $\overline{\bigcup_n (Z_n^0)^*}$
is
disjoint from $\overline{\bigcup_n(Z_n^1)^*}$. 

Now we prove that it is a $P_{\omega_2}$-point. Assume that $\{
Z_\alpha : \alpha\in \omega_1\}\subset \mathcal Z$ is a mod finite
increasing sequence. By Lemma
\ref{mainlemma}  (similar to Corollary \ref{maincorollary}) we 
may assume, by possibly  removing some $Z_{\alpha_0}$ from each 
$Z_\alpha$, that
there is a
sequence $\{ h_\alpha : \alpha \in \omega_1\}$ of involutions 
such that $h_\alpha$ induces $F\restriction Z_\alpha^*$. 
For each $\alpha\in \omega_1$, let $a_\alpha = \min(h_\alpha) 
=\{ i\in \dom(h_\alpha) : i<h_\alpha(i)\}$ and $b_\alpha =
Z_\alpha\setminus a_\alpha$. It follows that $F[a_\alpha]=^*
b_\alpha$. 
Since $\Poset^*$ is $\aleph_2$-distributive, all of these
$\aleph_1$-sized sets are in $V$ which is a model of PFA. 
If $x$ is in the closure of $\bigcup_{\alpha\in \omega_1} Z_\alpha^*$, 
then $x$ is in the closure of each of $\bigcup_\alpha a_\alpha^*$ 
and $\bigcup_\alpha b_\alpha^*$. Therefore, it suffices to 
show that $\mathcal A =
\{ (a_\alpha, b_\alpha) : \alpha\in \omega_1\}$ can not
form a gap in $V$. As is well-known, if $\mathcal A$ does
form a gap, there is a ccc poset $Q_{\mathcal A}$ which adds an
uncountable $I$ such that $\{ (a_\alpha, b_\alpha) : \alpha \in I\}$ 
forms a Hausdorf-gap (i.e. {\em
  freezes\/} the gap). It is easy to prove that if $\mathbb C$ is the
poset for adding $\omega_1$-many almost disjoint Cohen reals,
$\{ \dot C_\xi : \xi\in \omega_1\}$, 
then a similar ccc poset $\mathbb C* \dot
Q$ will introduce, for each $\xi\in \omega_1$,
 an uncountable $I_\xi\subset \omega_1$, such
that $\{ (\dot C_\xi\cap a_\alpha, \dot C_\xi\cap b_\alpha) :
\alpha\in I_\xi\}$ 
is a Hausdorff-gap. But now by Lemma \ref{mainlemma},
 it follows 
 that there is some $\xi\in \omega_1$ such that
$Z=C_\xi\in \mathcal Z$, $F\restriction Z^*$ is trivial
 and for some uncountable
 $I\subset\omega_1$ 
$\{ (Z\cap a_\alpha, Z\cap b_\alpha) : \alpha \in I\}$ forms a 
Hausdorff-gap. This however is a contradiction because
 if $h_Z$ induces
$F\restriction Z^*$, then $\min(h_Z) \cap 
\left((Z\cap a_\alpha)\cup (Z\cap b_\alpha)\right)$ is almost equal to
$a_\alpha$ for all $\alpha\in \omega_1$, i.e. $\min(h_Z)$ would have
to split the Hausdorff-gap.
\end{proof}

The forcing
 $\Poset(\mathfrak  F)$ introduces a tuple $\vec{f}$ which
satisfies $\vec{f}\leq \vec{f_\alpha}$ for $\vec{f_\alpha}\in
 \mathfrak 
 F$ but for the fact that $\vec{f}$ may not be a member of
$\Poset^*$ simply because the domains of the component functions
 are too big. There is a
 $\sigma$-centered poset which will choose an appropriate sequence 
 $\vec{f}^*$ of subfunctions
 of $\vec{f}$ which is a member of $\Poset^*$ and which is
 still below each member of $\mathfrak  F$
  (see \cite[2.1]{step.28}).

A strategic choice of the sequence $\mathfrak F$ will ensure that
$\Poset(\mathfrak F)$ is ccc, but remarkably even more is true.
Again we are lifting results from \cite[2.6]{step.28} and
\cite[proof of Thm. 3.1]{step.29}. This is an innovative factoring
of Velickovic's 
original amoeba forcing poset and seems to
 preserve more properties. Let
$\omega_2^{<\omega_1}$ denote the standard collapse which introduces a
function from $\omega_1$ onto $\omega_2$. 

\begin{lemma} 
Let\label{useful-lemma}
 $\Poset^*$ be a finite product of posets from
$\{\Poset_\ell : \ell\in \Naturals\}$.
In the forcing extension, $V[H]$,
 by $\omega_2^{<\omega_1}$,
  there is a descending sequence $\mathfrak F$ from $\Poset^*$ which
  is $\Poset^*$-generic over $V$ and, for which, 
$\Poset(\mathfrak F)$ is ccc and $\omega^\omega$-bounding.
\end{lemma}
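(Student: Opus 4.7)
The plan is to construct $\mathfrak{F}$ in $V[H]$ by transfinite recursion of length $\omega_1$, adapting the strategic construction from \cite[2.6]{step.28} and \cite[proof of Thm.~3.1]{step.29} to this factored product setting. The initial step will be a bookkeeping preparation: because $\omega_2^{<\omega_1}$ is $\sigma$-closed it adds no reals, and because $V\models \mathrm{PFA}$ implies $2^{\aleph_1}=\aleph_2$ in $V$, in $V[H]$ the classes of (a) $V$-dense subsets of $\Poset^*$, (b) $V$-coded candidate maximal antichains of $\Poset(\mathfrak{F})$ (coded as subsets of $\Poset^*$), and (c) $V$-coded $\Poset(\mathfrak{F})$-names for members of $\omega^\omega$, each have cardinality at most $\aleph_1$. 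I would fix an enumeration $\{\mathcal{O}_\xi : \xi<\omega_1\}$ of all these tasks.

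Next I would define $\mathfrak{F}=\langle \vec{f}_\xi : \xi<\omega_1\rangle$ recursively. At a limit stage $\xi$, the pointwise union of the previously chosen $\vec{f}_\eta$ is only a pseudo-lower-bound: its domain will typically violate the growth condition of Definition~\ref{poset}. To land back inside $\Poset^*$ I would invoke the $\sigma$-centered auxiliary poset mentioned immediately after this lemma (namely \cite[2.1]{step.28}) to prune the pseudo-bound to an actual $\vec{f}\in \Poset^*$ that still extends each earlier $\vec{f}_\eta$. At a successor stage I would further refine $\vec{f}$ either to meet $\mathcal{O}_\xi$ when it is a dense set, or to execute a sealing operation: given a candidate antichain I would force two of its members to have a common extension in $\Poset(\mathfrak{F})$, and given a name for an element of $\omega^\omega$ I would commit the name to a ground-model bound.

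Granting the construction, the three conclusions should follow: $\Poset^*$-genericity is immediate from meeting every $V$-dense set; for the ccc of $\Poset(\mathfrak{F})$, any uncountable family $\mathcal{A}\subseteq\Poset(\mathfrak{F})$ in $V[H]$ reduces by the enumeration to one of the anticipated $\mathcal{O}_\xi$, and the sealing at stage $\xi$ delivers two compatible members; and similarly every $\Poset(\mathfrak{F})$-name for a function in $\omega^\omega$ will already have been committed to a $V$-bound, giving $\omega^\omega$-boundedness.

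The main obstacle will be executing the sealing step: producing a single $\vec{f}_\xi$ that is simultaneously below every earlier $\vec{f}_\eta$ in the $\Poset^*$-order, meets any pending dense set, and commits enough data about $\mathcal{O}_\xi$ to foreclose ccc-failure or an unbounded name downstream. This is where the growth condition on the intervals $\ell^{n+1}\setminus \ell^n$ becomes essential, since it leaves room for a $\Delta$-system plus pigeonhole argument, applied coordinate-by-coordinate across the finite product, to identify the common extension one needs; this is the content of \cite[2.6]{step.28}, and the adaptation to a finite product should be straightforward once the one-coordinate case is in hand.
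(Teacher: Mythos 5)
The paper does not supply a proof of this lemma at all: it explicitly states that it is ``lifting results from \cite[2.6]{step.28} and \cite[proof of Thm.~3.1]{step.29}'' and records the statement without argument, so there is no paper proof to compare against. What can be done is to assess your proposal on its own terms.

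The high-level shape --- work in $V[H]$, run a length-$\omega_1$ recursion producing the descending $\mathfrak{F}$, use the $\sigma$-centered auxiliary poset of \cite[2.1]{step.28} at limits to prune the pseudo-bound back into $\Poset^*$ --- is plausible and in the spirit of the cited work. However, there are two substantive gaps. First, the cardinality bookkeeping in your opening paragraph does not obviously hold. You claim the classes of $V$-dense subsets of $\Poset^*$, of candidate maximal antichains of $\Poset(\mathfrak{F})$, and of $\Poset(\mathfrak{F})$-names for reals each have size at most $\aleph_1$ in $V[H]$. But $|\Poset^*|=2^{\aleph_0}=\aleph_2^V$, so $V$ contains $2^{\aleph_2}$ subsets of $\Poset^*$, and $\omega_2^{<\omega_1}$ does not collapse that below $\aleph_2^{V[H]}$. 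Meeting every $V$-dense set with a single $\omega_1$-chain is therefore not a matter of brute enumeration; it must exploit the $\aleph_1$-closure of $\Poset^*$ and a carefully chosen $\aleph_1$-sized generating family of open dense sets (in the sense that any antichain is refined by one already handled), and that reduction is exactly the content that needs proof, not an assumption.

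Second, and more seriously, the ``sealing'' step for ccc and bounding as you describe it cannot be executed as a forward recursion. A maximal antichain of $\Poset(\mathfrak{F})$ and a $\Poset(\mathfrak{F})$-name for a function in $\omega^\omega$ are objects of $V[H]$ that, by the very definition of $\Poset(\mathfrak{F})$, reference conditions $g$ with $\vec g \equiv^* \vec f_\xi$ for $\xi$ unbounded in $\omega_1$; such an object is simply not visible from any proper initial segment $\mathfrak{F}\restriction\xi$, so one cannot ``reduce it by the enumeration to one of the anticipated $\mathcal{O}_\xi$ and seal it there.'' What is actually needed is a construction of $\mathfrak{F}$ that imposes uniform combinatorial structure on all the $\vec f_\xi$ simultaneously (a fusion controlling the interval pattern of the domains across coordinates), and then a post-hoc $\Delta$-system/pigeonhole argument showing that any uncountable subset of $\Poset(\mathfrak{F})$ contains two compatible elements and that any name for a real is dominated. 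You gesture at the right tools in your final paragraph, but the framing of the proof as ``enumerate $\omega_1$ tasks and seal each at its stage'' hides the actual circularity that the fusion is designed to break; as written the ccc and $\omega^\omega$-bounding conclusions do not follow from the recursion you describe.
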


It follows also that $\Poset(\mathfrak F)$ preserves that
$\mathbb R\cap V$ is of 
second category. This was crucial in the proof of 
Lemma \ref{mainlemma}. We can manage with the $\omega^\omega$-bounding
property because we are going to use
Lemma \ref{mainlemma}.  A poset is said to be $\omega^\omega$-bounding
if every new function in $\omega^\omega$ is bounded by some ground
model function.

The following proposition is probably well-known but we do not have a
reference. 

\begin{proposition} Assume\label{bounding}
 that $\mathbb Q$ is a ccc
  $\omega^\omega$-bounding poset and that $x$ is an 
 ultrafilter on
  $\Naturals$. If $G$ is a $ \mathbb Q$-generic filter then there is
  no set $A\subset\Naturals$ such that $A\setminus Y$ is finite for
  all $Y\in x$.
\end{proposition}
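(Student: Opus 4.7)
The plan is to suppose, toward a contradiction, that some $p\in\mathbb Q$ forces $\dot A$ to be an infinite subset of $\Naturals$ with $\dot A\setminus Y$ finite for every $Y\in x$, and to use ccc and $\omega^\omega$-bounding to refine $\dot A$ into a structure aligned with disjoint ground-model intervals, at which point the pseudo-intersection hypothesis forces a push-forward of $x$ to coincide with the Fr\'echet filter, contradicting $x$ being an ultrafilter on $\omega$.

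Let $\dot a_n$ name the $n$-th element of $\dot A$ in increasing order. Using $\omega^\omega$-bounding, fix $f\in V$ with $f\geq\mathrm{id}$ and $p\Vdash \dot a_n\leq f(n)$, and in $V$ define $F\colon\omega\to\omega$ by $F(0)=0$ and $F(n+1)=f(F(n))+1$; the intervals $I_n=[F(n),F(n+1))$ are then pairwise disjoint. Since $\dot a_k\geq k$, one has $F(n)\leq \dot a_{F(n)}\leq f(F(n))<F(n+1)$, so $\dot a_{F(n)}\in I_n$. Replace $\dot A$ by the infinite subset $\dot A^{*}=\{\dot a_{F(n)}:n\in\omega\}$, which still satisfies $\dot A^{*}\setminus Y$ finite for every $Y\in x$ and by construction meets each $I_n$ in exactly one element.

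Next, set $S_n=\{k\in\omega:\exists q\leq p,\ q\Vdash\dot a_{F(n)}=k\}$; this is a finite nonempty subset of $I_n$ lying in $V$, the $S_n$ are pairwise disjoint, and $W=\bigcup_n S_n\in V$ contains $\dot A^{*}$. If $W\notin x$, then $\omega\setminus W\in x$ and $\dot A^{*}\setminus(\omega\setminus W)=\dot A^{*}$ is infinite, an immediate contradiction; so $W\in x$. Push $x$ forward through the surjection $\pi\colon W\to\omega$ with $\pi^{-1}(n)=S_n$ to obtain, in $V$, the ultrafilter $y=\{N\subseteq\omega:\bigcup_{n\in N}S_n\in x\}$ on $\omega$.

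The contradiction now drops out: for any $N\in y$, $C_N=\bigcup_{n\in N}S_n\in x$, so $\dot A^{*}\setminus C_N$ must be finite; but by disjointness of the $S_n$, $\dot a_{F(n)}\in C_N$ iff $n\in N$, so $\dot A^{*}\setminus C_N=\{\dot a_{F(n)}:n\notin N\}$ has cardinality $|\omega\setminus N|$. Hence every $N\in y$ is cofinite, i.e.\ $y$ is contained in the Fr\'echet filter---absurd for an ultrafilter on $\omega$ (apply to $N$ = the even integers, or to $\{n_0\}$ if $y$ is principal at $n_0$). The main technical point, and where the work really sits, is the choice of $F$: one has to set up disjoint intervals so that the refined $\dot A^{*}$ meets each $S_n$ in exactly one element, for it is this rigidity that converts the pseudo-intersection hypothesis into the cofinite-selection property collapsing $y$ to the Fr\'echet filter.
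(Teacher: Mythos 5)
Your proof is correct, and it takes a genuinely different (and slightly longer) route to the contradiction than the paper does, though the opening move is the same. Both proofs use $\omega^\omega$-bounding to confine the increasing enumeration of $\dot A$ to a sequence of disjoint ground-model intervals. From that point the paper argues very directly: with intervals $[n_k,n_{k+1})$ chosen so that indices in $[n_k,n_{k+1})$ get sent into $[n_k,n_{k+2})$, it splits $\omega$ into three periodic unions of intervals and uses the ultrafilter property of $x$ once, to pick the residue class $\ell$ with $Y_\ell=\bigcup_k[n_{3k+\ell},n_{3k+\ell+1})\in x$; then the bounding confinement immediately forces $\dot A\setminus Y_\ell$ to meet infinitely many of the skipped blocks, done. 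You instead thin to a one-point-per-interval subname $\dot A^*$, extract in $V$ the finite possible-value sets $S_n\subseteq I_n$, observe $W=\bigcup_n S_n\in x$, push $x$ forward along $\pi$ with fibers $S_n$ to an ultrafilter $y\in V$, and then show every $N\in y$ must be cofinite (since $p$ forces $\lvert\dot A^*\setminus\bigcup_{n\in N}S_n\rvert=\lvert\omega\setminus N\rvert$), so $y$ would be contained in the Fr\'echet filter --- impossible. Your version packages the end-game as ``the push-forward of $x$ would have to be Fr\'echet,'' a clean conceptual statement, at the cost of introducing the $S_n$ and the push-forward machinery; the paper's mod-$3$ trick is shorter and avoids naming the push-forward explicitly. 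Both are fine; neither really uses ccc beyond the usual hygiene.

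One small imprecision worth fixing: when you invoke $\omega^\omega$-bounding you write ``fix $f\in V$ with $f\geq\mathrm{id}$ and $p\Vdash\dot a_n\leq f(n)$,'' but bounding only gives you $f$ together with some $q\leq p$ forcing the bound, not the same $p$. Since any such $q$ still forces $\dot A$ (and hence $\dot A^*$) to be a pseudo-intersection of $x$, you should either replace $p$ by $q$ at that point or say ``strengthening $p$ if necessary.'' After that substitution everything goes through exactly as you wrote it. Also note, as you implicitly do, that the statement carries the standing assumptions that $\dot A$ is infinite and $x$ is nonprincipal; otherwise the claim fails trivially.
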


\begin{proof} Assume that
 $\{ \dot a_n : n\in \omega\}$ are $\mathbb Q$-names of integers such
that $1\forces{\mathbb Q}{\dot a_n \geq n}$. Let $A$ denote the
$\mathbb Q$-name so that $\forces{\mathbb Q}{A=\{\dot a_n : n\in
  \omega\}}$. 
Since $\mathbb Q$ is
$\omega^\omega$-bounding, there is some $q\in \mathbb Q$ and a
sequence $\{ n_k : k\in \omega\}$
in $V$ such that $q\forces{\mathbb Q}{ n_k \leq 
\dot a_i \leq n_{k+2}~~~\forall i\in [n_k,n_{k+1})}$. 
There is some $\ell\in 3$ such that $Y=\bigcup_{k}
[n_{3k+\ell},n_{3k+\ell+1})$ is a member of $x$. On the other hand,
$q\forces{\mathbb Q}{A\cap [n_{3k+\ell+1},n_{3k+\ell+3})}$ is not empty
  for each $k$. Therefore $q\not\forces{\mathbb Q}
{A\setminus Y\ \mbox{is finite} }$.
\end{proof}

Another interesting and useful general lemma is the following.

\begin{lemma} 
Let\label{also-useful-lemma}
 $\mathcal F\subset \Poset_\ell$  (for any $\ell\in \Naturals$)
  be generic over $V$, then for each $\Poset(\mathfrak
  F)$-name $\dot h \in \Naturals^\Naturals$, either there is an 
$ f\in \mathfrak F$  such that 
$ f\forces{\Poset(\mathfrak F)}{\dot
  h\restriction \dom(f) \notin V}$, or there is an $f\in \mathfrak F$
and an increasing sequence $n_0<n_1<\cdots$ of integers such that
for each $i\in [n_k,n_{k+1})$ and each $g<f$ such that $g$ forces a
value on $\dot h(i)$, $f\cup (g\restriction[n_k,n_{k+1}))$ also forces
 a value on $\dot h(i)$.
\end{lemma}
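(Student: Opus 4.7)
The plan is to prove the dichotomy by establishing that for any $f\in\mathfrak F$, if (2) fails for $f$ then (1) holds for $f$; the lemma then follows by picking an arbitrary $f\in\mathfrak F$. Fix $f\in\mathfrak F$ and suppose (2) fails for $f$: for every increasing sequence $n_0<n_1<\cdots$ there exist $k$, $i\in[n_k,n_{k+1})$, and $g<f$ in $\Poset(\mathfrak F)$ with $g$ forcing a value on $\dot h(i)$ while $f\cup(g\restriction[n_k,n_{k+1}))$ does not. To prove $f\Vdash_{\Poset(\mathfrak F)} \dot h\restriction\dom(f)\notin V$, I would show that for each $v\in V\cap\Naturals^{\Naturals}$ the set
\[
D_v=\{g'\leq f \text{ in }\Poset(\mathfrak F) : g'\Vdash \dot h(i)\neq v(i)\text{ for some }i\in\dom(f)\}
\]
is dense below $f$ in $\Poset(\mathfrak F)$.

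For the density, fix $v\in V$ and $g_0\leq f$. Let $\xi$ be such that $g_0\equiv^* f_\xi$ and choose $N$ larger than the support of the finite symmetric difference $g_0\triangle f_\xi$. Apply the failure of (2) to $f_\xi\in\mathfrak F$ with a block sequence $N=n_0<n_1<\cdots$: this yields a block $[n_{k_0},n_{k_0+1})\subseteq[N,\infty)$, a point $i_0$ in that block, and $g<f_\xi$ with $g\Vdash\dot h(i_0)=w$ while $f_\xi\cup(g\restriction[n_{k_0},n_{k_0+1}))$ does not decide $\dot h(i_0)$. From the non-decision, obtain $g^+$ extending $f_\xi\cup(g\restriction[n_{k_0},n_{k_0+1}))$ with $g^+\Vdash\dot h(i_0)=w'\neq w$, and pick $G\in\{g,g^+\}$ whose forced value on $\dot h(i_0)$ differs from $v(i_0)$. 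Then define $g_1 := g_0\cup(G\restriction[N,\infty))$; by the choice of $N$, on $[N,\infty)$ both $g_0$ and $G$ agree with $f_\xi$ (up to $G$'s further $\mathfrak F$-schema extension), so $g_1\leq g_0$ is a valid condition in $\Poset(\mathfrak F)$.

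The main obstacle is verifying that $g_1\Vdash\dot h(i_0)\neq v(i_0)$: while $G$ forces this value, the decision could, a priori, depend on $G$'s finite modification of $f_\xi$ over $[0,N)$, which $g_1$ has replaced with $g_0$'s own modification. I would overcome this by strengthening the choice of $G$ to be of the form $f_{\xi^*}\cup\sigma$ with the finite modification $\sigma$ entirely contained in $[N,\infty)$, using density in $\Poset(\mathfrak F)$ together with the natural interval structure of $\Poset_\ell$-conditions on $[\ell^n,\ell^{n+1})$; should $\sigma$ have support extending below $N$, enlarging $N$ finitely to absorb it works because $\sigma$ is finite. With $G$ so chosen, $G\restriction[N,\infty)$ alone carries the information needed to force $\dot h(i_0)$, so $g_1$ inherits the decision and places $g_1\in D_v$ below $g_0$, completing the density.
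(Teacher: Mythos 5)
Your plan — show that for \emph{every} $f\in\mathfrak F$, failure of clause (2) for $f$ implies clause (1) for $f$, and conclude by picking any $f$ — does not match the structure the lemma actually needs, and the execution has concrete gaps that trace back to this.

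First, the top-level claim is not justified and is very plausibly false. Failure of (2) for a particular $f$ just means that, no matter how one blocks $\Naturals$, some coordinate $i$ has its value $\dot h(i)$ depend on information outside the block; but that witness $i$ has no reason to lie in $\dom(f)$. For the specific $f$ you fixed, $\dot h\restriction\dom(f)$ could still be fully decided (so (1) fails), while the troublesome coordinates all lie off $\dom(f)$, causing (2) to fail for every sequence. The lemma is only asking for \emph{some} $f\in\mathfrak F$ satisfying the dichotomy, and the paper gets that $f$ by a fusion: starting from an arbitrary $f$, it iteratively absorbs the problematic coordinates $i$ (where the value cannot be decided in a block-preserving way) into the domain of the fused condition $\bar f$, building the block endpoints $n_k$ simultaneously. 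The resulting $\bar f$ either has undecidable coordinates inside $\dom(\bar f)$ (which gives (1)) or has none left (which gives (2) for the fused sequence). Genericity of $\mathfrak F$ then supplies such an $\bar f$ inside $\mathfrak F$. This construction cannot be bypassed; an arbitrary $f\in\mathfrak F$ will generally satisfy neither horn.

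Second, two steps in your density argument do not go through. (a) Your set $D_v$ quantifies over $i\in\dom(f)$, as it must for (1), but the witness $i_0$ you extract from the failure of (2) lies in $[n_{k_0},n_{k_0+1})$ and need not lie in $\dom(f)$; so even granting the rest, $g_1$ need not belong to $D_v$. (b) The ``main obstacle'' you flag is genuine and the proposed fix is circular: to conclude that $g_1=g_0\cup(G\restriction[N,\infty))$ still decides $\dot h(i_0)$, you would need the decision about $\dot h(i_0)$ to depend only on the restriction of $G$ to the tail $[N,\infty)$ (or to a block), but that localization property is exactly what clause (2) asserts, and you are arguing under the assumption that (2) fails. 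Choosing $G$ with its finite modification supported above $N$ does not help, because the forcing relation is not determined coordinatewise: $G$ and $g_1$ still disagree on $[0,N)$ and nothing rules out that this disagreement changes what is forced about $\dot h(i_0)$.

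In short, the lemma requires building a tailored condition by fusion and invoking genericity, as the paper does; a pointwise dichotomy for arbitrary $f$ is too strong, and the density argument cannot be patched without effectively reintroducing the fusion.
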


\begin{proof} 
Given any $f$, perform a standard
 fusion (see \cite[2.4]{step.28} or \cite[3.4]{step.29})
  $f_k,n_k$ by picking $L_k \subset [n_{k+1},n_{k+2})$
(absorbed into $\dom(f_{k+1})$) so that for each partial function $s$
on $n_{k}$ which extends $f_k\restriction n_{k}$, 
if there is some integer $i\geq 
n_{k+1}$ for which no ${<}n_{k}$-preserving
extension of $s\cup f_k$ 
forces a value on $\dot h(i)$, then there is such an integer
in  $\dom(f_{k+1})$. Let $\bar f$ be the fusion and note that 
either $\bar f$ forces that $\dot h\restriction \dom(\bar f)$ is not
in $V$, or it forces that our sequence of $n_k$'s does the job.
Thus, we have proven that for each $f$, there is such a $\bar f$, hence
by genericity, there is such an $\bar f$ in $\mathfrak F$.
\end{proof}

\section{Proof of Theorem \ref{main}}

\begin{theorem}[PFA] If $G$ is a generic filter for
$\Poset^* = \Poset_2\times\Poset_2$, then
there are
symmetric tie-points $x,y$ as
  witnessed by $A,B$ and $C,D$ respectively such that $\Nstar$ is 
not homeomorphic to  the space $ A\tie{x=y}C$
\end{theorem}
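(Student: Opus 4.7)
The plan is to proceed by contradiction: assume $h: \Nstar \to A\tie{x=y}C$ is a homeomorphism in $V[G]$, and ultimately extract a correspondence between the two $\Poset_2$-factor generics which contradicts their mutual genericity. Setting $z := h^{-1}(xy)$, the point $z$ is a tie-point of $\Nstar$ witnessed by $A^* := h^{-1}(A)$ and $C^* := h^{-1}(C)$, and $h$ restricts to homeomorphisms $A^*\to A$ (sending $z\mapsto x$) and $C^*\to C$ (sending $z\mapsto y$). Because being a $P_{\omega_2}$-point is a local property, and both $x,y$ are $P_{\omega_2}$-points by Corollary \ref{otherautos}, the gluing point $xy$ is a $P_{\omega_2}$-point of $A\tie{x=y}C$, and hence so is $z$. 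Moreover, $hF_1h^{-1}$ and $hF_2h^{-1}$ are involutions of $A\tie{x=y}C$ with unique fixed points $h(x)$ and $h(y)$, so $h(x)$ and $h(y)$ are themselves symmetric tie-points of $A\tie{x=y}C$.

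Next I would perform a case analysis on the positions of $h(x)$ and $h(y)$ in $A\tie{x=y}C$ -- each lying in $A\setminus\{xy\}$, equal to $xy$, or in $C\setminus\{xy\}$ -- and compose with the generic involutions $F_1, F_2$ to reduce to the case $h(x)=xy$. The tie-point partition $(A,B)$ at $x$ in $\Nstar$ must then correspond under $h$ to the canonical partition of $A\tie{x=y}C$ at $xy$, so $h$ induces a homeomorphism $B\to C$ carrying $x$ to $y$. Composing with $F_1$, I obtain a homeomorphism $\varphi : A \to C$ sending $x$ to $y$. The remaining task is to show no such $\varphi$ can exist in $V[G]$.

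To handle this, apply Lemma \ref{mainlemma} and Corollary \ref{maincorollary} to (a lift of) $\varphi$, obtaining on a cofinal chain of subsets of $\Naturals$ a coherent family of finite-to-one functions inducing $\varphi$. Working in the intermediate model of Lemma \ref{useful-lemma} with the ccc $\omega^\omega$-bounding quotient $\Poset(\mathfrak F)$, and using Lemma \ref{also-useful-lemma} together with Proposition \ref{bounding} to pull the relevant combinatorial data back into $V$, this coherent family produces a $V$-candidate bijection relating the $F_1$-structure at $x$ to the $F_2$-structure at $y$. The principal obstacle, and the technical heart of the argument, is the resulting density computation in $\Poset_2\times\Poset_2$: exploiting that each $\Poset_2$-factor condition preserves the interval partition $\{2^{n+1}\setminus 2^n\}_n$, and that the two factors are mutually generic over $V$, one must verify that any $V$-candidate $\varphi$ is frustrated on a dense set of conditions of $\Poset^*$. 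This density computation parallels the amoeba-factoring analysis of \cite{step.28,step.29}, and this is where essentially all the real work of the proof lies.
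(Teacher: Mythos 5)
Your reduction step is not valid. You want to "compose with the generic involutions $F_1, F_2$ to reduce to the case $h(x)=xy$," but $F_1$ and $F_2$ fix $x$ and $y$ respectively, so $h\circ F_1$ and $h\circ F_2$ still send $x$ to $h(x)$ and $y$ to $h(y)$; composing with $hF_ih^{-1}$ on the codomain side likewise fixes $h(x)$ and $h(y)$. There is no operation available that moves $h(x)$ to $xy$, and the case analysis (positions of $h(x),h(y)$ among $A\setminus\{xy\}$, $\{xy\}$, $C\setminus\{xy\}$) is not carried out, nor is it clear how the cases with $h(x)\neq xy$ would close. Your subsequent claim that ``the tie-point partition $(A,B)$ at $x$ must correspond under $h$ to the canonical partition at $xy$'' also presupposes $h(x)=xy$ and essential uniqueness of the partition at $xy$, neither of which you have. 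Consequently the whole reduction to a homeomorphism $\varphi:A\to C$ sending $x\mapsto y$ is unsupported, and the goal itself is only conditionally equivalent to the theorem (it is equivalent only under the assumption $h(x)=xy$, which is exactly what you cannot arrange).

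The paper does not attempt any such reduction. It represents $A\tie{x=y}C$ as a quotient of $(\Naturals\times 2)^*$, pulls back $\{a_\alpha\}$, $\{c_\alpha\}$ through the hypothetical $F$ to a chain $\{Z_\alpha\}$ dual to $z=h^{-1}(xy)$, and invokes Corollary~\ref{maincorollary} to get finite bijections $h_\alpha$ inducing $F\restriction[Z_\alpha]^\omega$. The technical heart (which your last paragraph only gestures at) is then entirely on the side of this coherent family: in $V[H]$ with $\mathfrak F$ as in Lemma~\ref{useful-lemma}, the $\omega^\omega$-bounding of $\Poset(\mathfrak F)$ yields an interval sequence $\{n_k\}$ and conditions $\vec f_0>\vec f_1>\vec f_2$ with controlled holes; splitting $\Naturals$ into $Y_0,Y_1$ along the $n_{6k}$-blocks and using that some $Y_j\subset^*Z_\beta$ forces $\dot h[Z_\beta]$ to almost contain $(a_\gamma\setminus a_\alpha)\times\{0\}$ for all $\gamma$, a contradiction; then Proposition~\ref{bounding} rules out non-finite-to-1 extensions; finally a freezing poset $Q_1$ and the $\sigma$-centered $Q_2$ are added and PFA is applied to $\omega_2^{<\omega_1}*\Poset(\mathfrak F)*\dot Q_1*\dot Q_2$ to manufacture $\omega_1$ incompatible $h'_\alpha$'s all of which are nonetheless trapped below a single $h_\lambda$. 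Your proposal does not reproduce or replace any of this mechanism, so even granting the (unjustified) reduction, the ``density computation'' would remain entirely to be done.
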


Assume that $\Nstar$ is homeomorphic to $A\tie{x=y}C$ and
that $z$ is the $\Poset^*$-name of the ultrafilter that is sent (by
the assumed homeomorphism)
to the point $(x,y)$ in the quotient space $A\tie{x=y}C$.

Further notation: let $\{ a_\alpha : \alpha\in\omega_2\}$ be the
$\Poset_2$-names of the infinite subsets of $\Naturals$ which form
the mod-finite increasing chain whose remainders in $\Nstar$
cover  $A\setminus\{x\}$ and, similarly
let $\{ c_\alpha : \alpha\in \omega_2\}$ be the $\Poset_2$-names
(second coordinates though) which form the chain in $C\setminus
\{y\}$.

If we represent $A\tie{x=y}C$ as a quotient of
$(\Naturals\times 2)^*$, we may assume 
that $F$ is a $\Poset^*$-name of a function from
$[\Naturals]^\omega$ into $[\Naturals\times 2]^\omega$ such that
letting 
$Z_\alpha = F^{-1}(a_\alpha\times\{0\} \cup c_\alpha\times\{1\})$
for each $\alpha\in \omega_2$, then $\{Z_\alpha : \alpha\in
\omega_2\}$ forms the dual ideal to $z$, and $F:[Z_\alpha]^\omega
\rightarrow (a_\alpha\times\{0\}\cup c_\alpha\times\{1\})^\omega$
induces the above
 homeomorphism from $Z_\alpha^*$ onto $(a_\alpha^*\times
\{0\}) \cup (c_\alpha^*\times\{1\})$. 

By Corollary \ref{maincorollary}, we may assume that for each
$\beta\in \omega_2$, there is a bijection 
 $h_\beta$ between some cofinite subset of
$Z_\beta$ and some cofinite subset of
$(a_\beta\times\{0\})\cup
(c_\beta\times\{1\})$ which induces $F\restriction [Z_\beta]^\omega$
(since we can just ignore $Z_{\alpha_0}$ for some fixed $\alpha_0$). 
We will use $F\restriction [Z_\beta]^\omega  = h_\beta$ to mean
that $h_\beta$ induces $F\restriction [Z_\beta]^\omega$.
 Note that by the assumptions,
 for each $\beta\in \omega_2$, there is a $\gamma\in \omega_2$ such
that each of $h_\gamma^{-1}(a_\gamma)\setminus Z_\beta$ and
$h_\gamma^{-1}(c_\gamma)\setminus Z_\beta$ are infinite.

Let $H$ be a generic filter for $\omega_2^{<\omega_1}$,
 and assume that
$\mathfrak F\subset \Poset^*$ is chosen as in Lemma
\ref{useful-lemma}.  In this model, let us use $\lambda$ to denote the
$\omega_2$ from $V$. 
Using the fact that $\mathfrak F$ is
 $\Poset^*$-generic over $V$, we may treat all the functions
$h_\alpha $ $(\alpha\in \lambda)$ as members of $V$ since we can take
 the valuation of all the $\Poset^*$-names using $\mathfrak F$. 
Assume that $\dot h$ is a $\Poset(\mathfrak F)$-name of a finite-to-1
function
 from $\Naturals$ into $\Naturals \times 2$ satisfying that
$h_\alpha\subset^* h$ for all $\alpha\in \lambda$.  We show there is
 no such $\dot h$.  

Since $\Poset(\mathfrak F)$ is $\omega^\omega$-bounding, there is a
increasing sequence of integers $\{ n_k : k\in \omega\}$ and an
$\vec{f_0}=(g_0,g_1)\in \mathfrak F$ such that 
\begin{enumerate}
\item for each $i\in [n_k,n_{k+1})$,
  $\vec{f_0}\forces{\Poset(\mathfrak F)}{
\dot h(i)\in ([0,n_{k+2})\times 2)}$
\item for each $i\in [n_k,n_{k+1})$, 
  $\vec{f_0} \forces{\Poset(\mathfrak F)}{
\dot h^{-1}(\{i\}\times 2)\subset [0,n_{k+2})}$
\item for each $k$ and each $j\in \{0,1\}$
 there is an  $m$ such that $n_k<
2^m<2^{m+1}<n_{k+1}$, and
$[2^m,2^{m+1}) \setminus \dom g_j$ has at least $k$ elements.
\end{enumerate}

Choose any $(g_0',g_1')=\vec{f_1}<\vec{f_0}$ such that 
$\Naturals\setminus \dom(g_0') \subset \bigcup_{k}
[n_{6k+1},n_{6k+2})$ and $\Naturals \setminus \dom(g_1')\subset
\bigcup_{k} [n_{6k+4},n_{6k+5})$. 
Next, choose any $\vec{f_2}<\vec{f_1}$ and some $\alpha\in \lambda$
such that $\vec{f_2}\forces{\Poset(\mathfrak F)}{
\dom(g_0')\subset^* a_\alpha\cup g_0'[a_\alpha]\ \mbox{and}\
\dom(g_1')\subset^* c_\alpha\cup g_1'[c_\alpha]}$. 
For each $\gamma\in \lambda$, 
note that $\vec{f_2}\forces{\Poset(\mathfrak F)}{
a_\gamma \setminus a_\alpha \subset^* \Naturals\setminus \dom(g_0')}$
and similarly
$\vec{f_2}\forces{\Poset(\mathfrak F)}{
c_\gamma\setminus c_\alpha\subset^* \Naturals \setminus \dom(g_1')}$.

Now consider the two disjoint sets:
$Y_0 = \bigcup_{k} [n_{6k},n_{6k+3}) $
and $Y_1=\bigcup_{k} [n_{6k+3},n_{6k+6})$. 
Since $z$ is an ultrafilter in this extension,
by possibly extending $\vec{f_2}$ even more, we may assume that there
is some $j\in \{0,1\}$ and
some $\beta>\alpha$ such that $\vec{f_2}\forces{\Poset(\mathfrak
  F)}{ Y_j\subset^* Z_\beta}$. Without loss of generality (by
symmetry) we may assume that $j=0$. Consider any $\gamma\in \lambda$.
Since we are assuming that $h_\gamma\subset^* \dot h$, we have that
 $\vec{f_2}$ forces that $h_\gamma[Z_\gamma\setminus
 Z_\alpha] =^* \dot h[Z_\gamma\setminus Z_\alpha]$. We also have
that $\vec{f_2}\forces{\Poset(\mathcal F)}
{ \dot h[Y_0]\ {{}^*{\supset}}\ 
(a_\gamma\setminus a_\alpha)\times \{0\} =^*
h_\gamma[Z_\gamma\setminus Z_\alpha]\cap \Naturals\times\{0\}
}
$. 
Putting this all together, 
we now have that
 $\vec{f_2}$ forces that $\dot h[Z_\beta]$ almost contains
 $(a_\gamma\setminus a_\alpha)\times \{0\}$ for all $\gamma\in
 \lambda$; which clearly contradicts that $\dot h[Z_\beta]$ is
 supposed to be almost equal to $h_\beta[Z_\beta]$.

So now what?  Well, let $H_2$ be a generic filter for
$\Poset(\mathfrak F)$ and consider the family of functions
$\mathcal H_\lambda =
\{ h_\alpha  : \alpha\in \lambda\}$ which we know does not have a
common finite-to-1 extension. 

 Before proceeding, we need to show that $\mathcal H_\lambda$ 
 does not have any extension $h$.  
If $\dot h$ is any $\Poset(\mathfrak F)$-name of a
 function for which it is forced that
$h_\alpha\subset^* \dot h$ for all $\alpha\in \lambda$, then 
there is some $\ell\in \Naturals$ such that 
$\dot Y =  h^{-1}(h(\ell))$ is (forced to be) 
infinite. It follows easily that $\dot Y$ is forced to be almost 
contained in every member of $z$. By Lemma \ref{bounding} this cannot
happen. Therefore the family $\mathcal H_\lambda$ does not have any
common extension.

Given such a family as $\mathcal H_\lambda$, 
there is a well-known 
proper poset $Q_1$ (see \cite[3.1]{DoSiVa},
\cite[2.2.1]{Farah00}, and \cite[p9]{veli.oca})
which will force an uncountable cofinal $I\subset \lambda$ and a
collection of integers $\{ k_{\alpha,\beta} : \alpha<\beta\in I\}$
satisfying that $h_\alpha(k_{\alpha,\beta}) \neq
h_\beta(k_{\alpha,\beta})$ (and both are defined) for $\alpha<\beta\in
I$. So, let $\dot Q_1$ be the $\omega_2^{<\omega}* \Poset(\mathfrak
F)$-name of the above  mentioned poset. In addition, let $\dot \varphi
$ be the 
 $\omega_2^{<\omega}* \Poset(\mathfrak F) * \dot Q_1$-name of the
enumerating function from $\omega_1$ onto $I$, and let $\dot
k_{\alpha,\beta}$ (for $\alpha<\beta\in \omega_2$) be the name
of the integer $k_{\dot \varphi(\alpha),\dot \varphi(\beta)}$. Thus
for each $\alpha<\beta\in \omega_1$, there is a dense set
$D(\alpha,\beta)\subset
\omega_2^{<\omega}* \Poset(\mathfrak F) * \dot Q_1$
such that for each member $p$ of $D(\alpha,\beta)$,
there are functions $h_\alpha, h_\beta$ in $V$ and sets 
$Z_\alpha=\dom(h_\alpha), Z_\beta=\dom(h_\beta)$
and integers $k=k(\alpha,\beta)\in Z_\alpha\cap Z_\beta$
 such that
$$p\forces{\omega_2^{<\omega}*\Poset(\mathfrak F)*\dot Q_1}
{F\restriction [Z_\alpha]^\omega = h_\alpha,
F\restriction [Z_\beta]^\omega = h_\beta,
h_\alpha(k)\neq h_\beta(k)
}.
$$

Finally, let $\dot Q_2$ be the
 $\omega_2^{<\omega}* \Poset(\mathfrak F) * \dot Q_1$-name of the
$\sigma$-centered poset which forces an element $\vec{f}\in \Poset^*$
which is below every member of $\mathfrak F$. 
Again, there is a countable collection of dense subsets of the proper
poset 
 $\omega_2^{<\omega}* \Poset(\mathfrak F) * \dot Q_1 *\dot Q_2$ which
 determine the values of $\vec{f}$. 

Applying PFA to the above proper poset and the family of $\omega_1$
mentioned dense sets, we find there is a sequence 
$\{ h'_\alpha, Z'_\alpha : \alpha\in \omega_1\}$,  integers
$\{ k_{\alpha,\beta} : \alpha<\beta\in \omega_1\}$, and a condition
$\vec{f}\in \Poset^*$ such that, for all $\alpha<\beta$ and
$k=k(\alpha,\beta)$, 
$$\vec{f}\forces{\Poset^*}
{F\restriction [Z'_\alpha]^\omega = h'_\alpha,
F\restriction [Z'_\beta]^\omega = h'_\beta,
h'_\alpha(k)\neq h'_\beta(k)
}.
$$

But, we also know that we can choose $\vec{f}$ so that 
there is some $\lambda\in \omega_2$, and some $h_\lambda, Z_\lambda$ 
such that, for all $\alpha\in \omega_1$,
 $Z_\alpha'\subset^*
Z_\lambda$ and $F\restriction [Z_\lambda]^\omega = h_\lambda$. 

It follows of course that for all $\alpha\in \omega_1$, there is some
$n_\alpha$ such that $h_\alpha'\restriction [n_\alpha,\omega)\subset
h_\lambda$. Let $J\in [\omega_1]^{\omega_1}$, $n\in \omega$, and
$h'$ a function with $\dom(h')\subset n$
 such
that
$n_\alpha = n$ and $h_\alpha'\restriction n = h'$ for all $\alpha\in
J$. We now have a contradiction since if $\alpha<\beta\in J$ then
clearly $k=k(\alpha,\beta)\geq n$ and this contradicts that 
$h'_\alpha(k)$ and $h'_\beta(k)$ are both supposed to equal
$h_\lambda(k)$.

\section{proof of Theorem \ref{two}}

\begin{theorem}[PFA]
If\label{pfatwo}
 $G$ is a generic filter for $\Poset_1$, then
a   tie-point $x$ is introduced such that $\tau(x)=2$  and 
 with $\Nstar = A\tie{x}B$, 
 neither $A$ nor $B$ is a homeomorph of
 $\Nstar$. In addition, there is no involution $F$ on
$\Nstar$ which has a unique fixed point, and so, no tie-point
is symmetric.
\end{theorem}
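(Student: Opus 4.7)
The plan is to establish the three assertions---$\tau(x) = 2$, that neither $A$ nor $B$ is homeomorphic to $\Nstar$, and that no involution of $\Nstar$ has a unique fixed point---by combining PFA-driven rigidity (Lemma \ref{mainlemma}, Corollaries \ref{maincorollary} and \ref{otherautos}) with the generic structure of $\Poset_1$ analyzed in the intermediate model $V[H][\mathfrak F]$ produced by Lemma \ref{useful-lemma}.

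That $\tau(x) \geq 2$ is immediate from Proposition \ref{observe}, since the generic for $\Poset_1$ yields a mod-finite partition of $\Naturals$ into two sides. To rule out $\tau(x) \geq 3$, I would suppose a putative splitting of one side into two clopen pieces both accumulating to $x$, represent it as a $\Poset(\mathfrak F)$-name in $V[H][\mathfrak F]$, and apply a fusion in the style of Lemma \ref{also-useful-lemma} together with the $\omega^\omega$-boundedness of $\Poset(\mathfrak F)$ to extend some $f \in \mathfrak F$ to a condition deciding the splitting on a mod-finite initial segment. Because $\Poset_1$ adds only a two-part partition, any such decision must coalesce two of the three candidate pieces along the generic, producing a clopen overlap and contradicting disjointness.

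The second claim is obtained by following the proof of Theorem \ref{main} almost verbatim, with the single poset $\Poset_1$ in place of $\Poset_2 \times \Poset_2$. If $\phi : \Nstar \to A$ is a homeomorphism, apply Corollary \ref{maincorollary} to extract a coherent family $\{h_\beta : \beta \in \omega_2\}$ of liftings of $\phi$ on a mod-finite increasing chain $\{Z_\beta\}$ whose remainders cover $A \setminus \{x\}$. Then force over $V[H]$ with $\Poset(\mathfrak F) * \dot Q_1 * \dot Q_2$, where $\dot Q_1$ produces an uncountable cofinal $I \subset \omega_2$ and integers $k_{\alpha,\beta}$ witnessing pointwise disagreement $h_\alpha(k_{\alpha,\beta}) \neq h_\beta(k_{\alpha,\beta})$, and $\dot Q_2$ is the $\sigma$-centered selector of a single $f \in \Poset_1$ below every member of $\mathfrak F$. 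Applying PFA to this proper poset and a countable family of dense sets produces $f$ together with a common lift $h_\lambda$ satisfying $h'_\alpha \subset^* h_\lambda$ for all $\alpha \in \omega_1$; a pigeonhole on the initial-error pair $(n_\alpha, h'_\alpha \restriction n_\alpha)$ then yields $\alpha < \beta$ with $k(\alpha, \beta) \geq n$ and $h'_\alpha(k) = h_\lambda(k) = h'_\beta(k)$, contradicting the disagreement clause.

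For the final claim, suppose $F$ is an involution of $\Nstar$ with unique fixed point $z$. Corollary \ref{otherautos} immediately yields that $z$ is a $P_{\omega_2}$-point, that $\Nstar = A' \tie{z} B'$, and that $F[A'] = B'$, whence $A' \equiv_z B'$. I would re-run the argument of the second claim with $F \restriction A' \to B'$ playing the role of $\phi$, using the coherent lifts supplied by Corollary \ref{maincorollary} (applied to the autohomeomorphism $F$ itself). The principal obstacle is that $F$ is a priori unrelated to the generic: one must first pull $F$, $z$, and the witnessing lifts into $V[H][\mathfrak F]$ via the $\aleph_2$-distributivity of $\Poset_1$, and then exploit the essential structural feature---that $\Poset_1$, unlike $\Poset_2$, adds no generic bijection between the two halves of its generic partition---to force the same common-lift-versus-disagreement contradiction. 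This gap between $\Poset_1$ and $\Poset_2$ drives the entire theorem and is the main conceptual difficulty of the proof.
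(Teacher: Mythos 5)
Your high-level strategy is correct in broad outline --- forcing with $\omega_2^{<\omega_1}$ to get $\mathfrak F$, showing no common lift $\dot h$ can exist in $V[H][\mathfrak F]$, then freezing the resulting gap with the proper poset $\dot Q_1 * \dot Q_2$ and applying PFA. You also correctly identify the guiding intuition that $\Poset_1$ fails to add a generic pairing between its two halves. But your sketch has a genuine gap: you do not identify that the decisive technical tool of the paper is Lemma~\ref{homog}, which is a substantial strengthening of Lemma~\ref{also-useful-lemma} valid only for $\Poset_1$. You propose to ``apply a fusion in the style of Lemma~\ref{also-useful-lemma}'' and to ``follow the proof of Theorem~\ref{main} almost verbatim,'' but neither works. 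The no-common-extension step in Theorem~\ref{main} used the two-coordinate product structure $(g_0,g_1) \in \Poset_2 \times \Poset_2$ to build the interlocking interval blocks $Y_0 = \bigcup [n_{6k},n_{6k+3})$, $Y_1 = \bigcup [n_{6k+3},n_{6k+6})$; this has no analogue for the single poset $\Poset_1$. The paper instead proves that there is $f_0 \in \mathfrak F$ and blocks $S_k$ with the much stronger property that $f_0 \cup \{(i,0)\}$ \emph{already} forces a value $\bar h(i)$ on $\dot h(i)$ for every $i \in \bigcup_k S_k = \Naturals \setminus \dom(f_0)$; all three parts of the proof (non-homeomorphism, $\tau(x)=2$, no involutions) run through this ground-model function $\bar h$, via a case analysis on how $\bar h^{-1}(0)$ and $\bar h^{-1}(1)$ sit relative to the generic ultrafilter $x$ and the sets $a_\alpha, Z_\alpha$, combined with the observation that $\bar h$ must be one-to-one off a finite set.

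Your description of the $\tau(x)=2$ step (``any such decision must coalesce two of the three candidate pieces along the generic, producing a clopen overlap'') is not an argument; the paper instead shows no $\dot h$ with $\dot h^{-1}(0)\cap a_\alpha =^* c_\alpha$ can exist by using $\bar h$ and an $f_1 < f_0$ whose domain swallows one of $\bar h^{-1}(0)$ or $\bar h^{-1}(1)$, then freezes the gap $\{(c_\alpha, a_\alpha\setminus c_\alpha)\}$ exactly as in the homeomorphism case. For the no-involution claim the paper does not merely re-run the second claim; it needs a further generalization of Lemma~\ref{homog} producing pairs $(S_k, T_k)$ with a bijection $\psi : S_k \to T_k$ and uses the resulting $\Psi = \bigcup \psi$ together with $\Psi^*(x) \neq z$ to contradict the deciding behavior of $\bar f$. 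Your sketch would be sound if you supplied Lemma~\ref{homog} and this $\Psi$-variant explicitly; as written, the claim that you can reuse Lemma~\ref{also-useful-lemma} and Theorem~\ref{main} verbatim conceals the actual core of the proof.
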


Assume that $V$ is a model of PFA and that $\Poset=\Poset_1$. The
elements of $\Poset$ are partial functions $f$ from $\Naturals$
 into 2 which also
satisfy that $\limsup_{n\in \Naturals} |2^{n{+}1}\setminus (2^n\cup
\dom(f))| =\infty$. The ordering on $\Poset$ is
that
 $f<g$ ($f,g\in \Poset$) if $g\subset^* f$. For each $f\in
\Poset$, let $a_f=f^{-1}(0)$ and $b_f=f^{-1}(1)$. 

Again we assume that $\{ a_\alpha : \alpha \in \omega_2\}$ is the
sequence of $\Poset$-names satisfying that $\Nstar = A\tie{x}B$ and
 $A\setminus \{x\} =\bigcup \{ a_\alpha^* : \alpha\in \omega_2\}$. 
Of course by this we mean that for each $f\in G$, there are $\alpha\in
\omega_2$, $a\in [\Naturals]^\omega$,
 and $f_1\in G$ such that $a_{f} \subset^* a \subset a_{f_1}$
and $f_1\forces{\Poset} { \check a = a_\alpha}$. 

Next we assume that, if $A$ is homeomorphic to 
$\Nstar$, then $F$ is a $\Poset$-name of a homeomorphism from $\Nstar$
to $A$ and let $z$ denote the point in $\Nstar$ which $F$ sends to
$x$. Also, let $Z_\alpha$ be the $\Poset$-name of $F^{-1}[a_\alpha]$
and recall that $\Nstar\setminus \{z\} = \bigcup\{Z_\alpha^* :
\alpha\in \omega_2\}$.
As above, we may also
assume that for each $\alpha\in \omega_2$, there is a $\Poset$-name of
a function $h_\alpha$ with $\dom(h_\alpha)=Z_\alpha$ such that 
$F\restriction [Z_\alpha]^\omega$ is induced by $h_\alpha$.

Furthermore if $\tau(x)>2$, then one of  $A\setminus \{x\}$ or
$B\setminus\{x\}$  can be
partitioned into disjoint clopen non-compact sets. We may assume that
it is $A\setminus \{x\}$ which can be so partitioned. 
Therefore 
there is some sequence $\{ c_\alpha :
\alpha\in \omega_2\}$ of $\Poset$-names such that for each
$\alpha<\beta\in \omega_2$, 
$c_\beta\subset a_\beta$ and $c_\beta\cap a_\alpha =^* c_\alpha$. In
addition, for each $\alpha<\omega_2$ there must be a
$\beta\in\omega_2$ such that $c_\beta\setminus a_\alpha$ and
$a_\beta\setminus (c_\beta\cup a_\alpha)$ are both infinite.

Now assume that $H$ is $\omega_2^{<\omega_1}$-generic and again 
choose a sequence $\mathfrak F\subset \Poset$ which is $V$-generic for
$\Poset$ and which forces that $\Poset(\mathfrak F)$ is ccc and
$\omega^\omega$-bounding.  For the rest of the proof we work
in the model $V[H]$ and we again let $\lambda$ denote the ordinal
$\omega_2^V$.

In the case of $\Poset_1$ we are able to prove a significant
strengthening of Lemma \ref{also-useful-lemma}.

\begin{lemma} Assume\label{homog}
 that $\dot h$ is a $\Poset(\mathfrak F)$-name of
  a function from $\Naturals$ to $\Naturals$. 
Either there is an 
$ f\in \mathfrak F$ and such that 
$ f\forces{\Poset(\mathfrak F)}{\dot
  h\restriction \dom(f) \notin V}$, or there is an $f\in \mathfrak F$
and an increasing sequence $ m_1 < m_2 < \cdots $
 of integers such that $\Naturals \setminus \dom(f) = \bigcup_k S_k$
 where
$S_k\subset 2^{m_{k+1}}\setminus  2^{m_k}$ and for each $i\in S_k$ the
condition $f\cup \{(i,0)\}$ forces a value on $\dot h(i)$. 
\end{lemma}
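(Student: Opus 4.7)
The plan is to adapt the fusion argument of Lemma \ref{also-useful-lemma}, sharpened for $\Poset_1$ whose conditions are partial functions to $2$ without any pairing constraint, so that a decisive extension at $i$ can be the single cell $\{(i,0)\}$. The strategy is to show that the set
\[
D = \{f\in \Poset_1 : f \text{ satisfies (a) or (b) of the conclusion}\}
\]
is dense in $\Poset_1$, whence $V$-genericity of $\mathfrak F$ over $\Poset_1$ gives $f\in \mathfrak F\cap D$ witnessing the lemma.

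Given $f\in \Poset_1$, I construct a descending fusion $f = f_0\supseteq f_1\supseteq\cdots$ in $\Poset_1$ and integers $m_0<m_1<\cdots$ with $f_{k+1}\restriction 2^{m_k} = f_k\restriction 2^{m_k}$, so the fusion $\bar f = \bigcup_k(f_k\restriction 2^{m_k})$ is a well-defined partial function. At stage $k{+}1$, inside $[2^{m_k}, 2^{m_{k+1}})$ I classify each $i$ as either \emph{decisive} (leave $i\notin \dom(f_{k+1})$, with the guarantee that $f_{k+1}\cup\{(i,0)\}$ decides $\dot h(i)$) or \emph{absorbed} (include $i\in \dom(f_{k+1})$ because no extension of $f_k$ inside $[2^{m_k}, 2^{m_{k+1}})$ makes the single-cell extension $\cdot \cup\{(i,0)\}$ decide $\dot h(i)$). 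I simultaneously reserve at least $k$ free cells in some block $[2^n,2^{n+1})\subseteq [2^{m_k},2^{m_{k+1}})$, so $\bar f$ inherits the $\Poset_1$ growth condition.

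Case analysis on the absorbed set $T\subseteq \dom(\bar f)\setminus \dom(f)$. If $T$ is infinite, I claim $\bar f \Vdash_{\Poset(\mathfrak F)} \dot h\restriction \dom(\bar f)\notin V$, giving (a). Indeed, for any $h^*\in V^{\Naturals}$ and any $g\leq\bar f$ in $\Poset(\mathfrak F)$, the stubbornness at an absorbed $i\in T$ provides incompatible refinements of $g$ forcing $\dot h(i)$ to distinct values; at least one of these disagrees with $h^*(i)$, so disagreements are dense below $\bar f$, i.e., $\bar f$ forces that $\dot h\restriction \dom(\bar f) \neq h^*$ for every $h^*\in V$. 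If $T$ is finite, truncating finitely many initial stages gives $\bar f$ satisfying (b) with $S_k = [2^{m_k},2^{m_{k+1}})\setminus \dom(\bar f)$; a split of $S_k$ according to which $v\in \{0,1\}$ works, followed by the natural $0\leftrightarrow 1$ involution of $\Poset_1$ applied on the free part of $\bar f$ (if the $v=1$ side dominates), secures the uniform value $0$ required by the statement.

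The main obstacle is the diagonalization in the infinite-$T$ case: verifying that stubbornness --- no local single-cell extension decides --- really propagates to arbitrary extensions $g\leq \bar f$ in $\Poset(\mathfrak F)$, enabling disagreement with any $h^*\in V^{\Naturals}$. This requires careful exploitation of the pairing-freeness of $\Poset_1$, which guarantees that single-cell refinements $g\cup\{(i,0)\}$ remain $\Poset_1$-conditions and that undecidedness persists through passage to extensions, together with the reserved free cells of $\bar f$ to accommodate the diagonalizing refinements. A secondary concern is maintaining the growth condition $\limsup_n|2^{n+1}\setminus(2^n\cup\dom(\bar f))| = \infty$, handled by the cell reservation at each fusion stage.
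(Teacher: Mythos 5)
Your proof takes a genuinely different route from the paper's, and it has a gap that I do not see how to close.

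The paper begins by invoking Lemma \ref{also-useful-lemma} to obtain $f_0\in\mathfrak F$ together with a sequence $n_0<n_1<\cdots$ with the crucial \emph{locality} property: if some $g<f_0$ forces a value on $\dot h(i)$ for $i\in[n_k,n_{k+1})$, then $f_0\cup(g\restriction[n_k,n_{k+1}))$ already forces that value. Only after this reduction does the paper run its (two-pass) fusion, and every step of that fusion is grounded in the fact that the question ``does such-and-such condition decide $\dot h(i)$?'' depends only on what happens inside the block containing $i$. Your proposal skips this step entirely and works directly, classifying cells as ``decisive'' or ``absorbed'' based on extensions of $f_k$ that live \emph{inside} $[2^{m_k},2^{m_{k+1}})$. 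But in $\Poset(\mathfrak F)$, forcing is not local: a global extension $g\leq\bar f$ may decide $\dot h(i)$ even though no extension confined to the block does so. This breaks your infinite-$T$ case. You absorb $i$ into $\dom(f_{k+1})$ and then claim that for every $g\leq\bar f$ there are refinements of $g$ forcing $\dot h(i)$ to distinct values. But the ``stubbornness'' you established was only that no block-local extension of $f_k$ makes $\cdot\cup\{(i,0)\}$ decisive; it says nothing about whether $g$ itself (which extends $\bar f$ across all blocks, and which has already committed to a value at $i$) decides $\dot h(i)$. Without locality there is simply no reason why the undecidedness should persist, and the diagonalization against $h^*\in V$ does not go through. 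A secondary consequence is that the classification within a block is not stable: making one cell decisive requires extending $f_k$ on neighboring cells, which may in turn change whether another cell is absorbed, an interaction your sketch does not resolve and which the paper handles by its ordered recursion of length $2^j$ through the reserved cells $i^0_k<i^1_k<\cdots$.

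Two smaller problems. First, your growth-condition bookkeeping reserves $k$ free cells per block, but you do not argue that these reserved cells can all be taken to be decisive; if any must be absorbed, you either lose the $\Poset_1$-growth condition or end up with free cells in $S_k$ that fail the conclusion. Second, in the finite-$T$ case the appeal to the $0\leftrightarrow 1$ involution to ``secure the uniform value $0$'' does not work: flipping $0\leftrightarrow1$ is an automorphism of $\Poset_1$, but it does not fix the specific sequence $\mathfrak F$ and hence does not act on $\Poset(\mathfrak F)$ or its names; the conclusion genuinely requires that the single-cell extension with value $0$ decides, and the paper's fusion (choosing $i^0_k\in (f_1')^{-1}(0)$ from the start, and verifying decisiveness for all $s$ into $2$ on the reserved cells) is set up precisely so that this comes out directly rather than by a symmetry argument. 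In short: you need Lemma \ref{also-useful-lemma} (or an equivalent locality reduction) before the block-by-block classification can be meaningful, and once you have it, the dichotomy you want is no longer ``infinite $T$ vs.\ finite $T$'' but the one already built into that lemma.
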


\begin{proof}
First we  choose $f_0\in \mathfrak F$ 
and some increasing sequence $n_0<n_1<\cdots n_k<\cdots $
 as in  
 Lemma \ref{also-useful-lemma}.  We may choose, for each $k$,
an $m_k$ such that $n_k \leq 2^{m_k} < 2^{m_k+1} \leq n_{k+1}$ 
such that $\limsup_{k} | 2^{m_k+1}\setminus (2^{m_k}\cup \dom(f_0))| =
\infty$. For each $k$, let $S^0_k = 
2^{m_k+1}\setminus (2^{m_k}\cup \dom(f_0))$.
By re-indexing we may assume that $|S^0_k|
\geq k$, and we may arrange that $\Naturals \setminus \dom(f_0)
$ is equal to $\bigcup_k S^0_k$ and set $L_0=\Naturals$. 
For each $k\in L_0$,  let $i^0_k = \min S^0_k$ and choose any
$f'_1<f_0$ such that (by definition of $\Poset$)  
 $I_0= \{ i^0_k : k\in L_0\} \subset
(f_1')^{-1}(0)$ and (by assumption on $\dot h$)
 $f_1'$ forces a value on $\dot h(i^0_k)$ for each $k\in L_0$. 
Set $f_1 = f_1'\restriction (\Naturals \setminus I_0)$ and
for each $k\in L_0$, let $S^1_k = S^0_k \setminus
(\{i^0_k\}\cup\dom(f_1))$. By further extending $f_1$ we may also
assume that  $f_1\cup \{(i^0_k,1)\}$ also forces a value on $\dot
h(i^0_k)$.
Choose $L_1\subset L_0$ such that $\lim_{k\in L_1} |S^1_k| =
\infty$. Notice that each member of $i^0_k $ is the minimum element of 
$S^1_k$.
Again, we may extend $f_1$ and assume that 
$\Naturals \setminus \dom(f_1)$ is equal to $\bigcup_{k\in L_1} S^1_k$.
Suppose now we have some
infinite $L_j$, some $f_j$, and for $k\in L_j$, an increasing
sequence $\{ i^0_k, i^1_k, \ldots, i^{j-1}_k\}\subset S^0_k$. Assume
further that 
$$S^j_k\cup \{ i^\ell_k : \ell<j\} = S^0_k \setminus \dom(f_j) $$
and that $\lim_{k\in L_j} |S^j_k| = \infty$. For each $k\in L_j$, 
let $i^j_k = \min(S^j_k\setminus \{i^\ell_k : \ell<j\})$. By a simple
recursion of length $2^j$, there is an $f_{j+1} < f_j$ such that,
for each $k\in L_j$, 
$\{i^\ell_k : \ell \leq j\}\subset S_k^0\setminus \dom(f_{j+1})$ and
for each function $s$ from $\{i^\ell_k : \ell \leq j\}$ into $2$, 
the condition $f_{j+1}\cup s$ forces a value on $\dot h(i^j_k)$.
Again find $L_{j+1}\subset L_j$ so that $\lim_{k\in L_{j+1}} |
S^{j+1}_k| =\infty$ (where $S^{j+1}_k = S^0_k\setminus \dom(f_{j+1})$)
and extend $f_{j+1}$ so that $\Naturals \setminus \dom(f_{j+1})$ is
equal to $\bigcup_{k\in L_{j+1}} S^{j+1}_k$. 

We are half-way there. At the end of this fusion, the function
 $\bar f = \bigcup_j f_j$ is a member of $\Poset$ because for each $j$
 and 
 $k\in L_{j+1}$, $2^{m_k+1}\setminus (2^{m_k}\cup \dom(\bar f))\supset
\{i^0_k, \ldots, i^j_k\}$.  
 For each $k$, let $\bar S_k = S^0_k
\setminus \dom(\bar f)$ and, by possibly extending $\bar f$, we
may again assume that there is some $L$ such that $\lim_{k\in L} |\bar
S_k| =\infty$ and that, for $k\in L$,
$\bar S_k = \{i^0,i^1_k,\ldots, i^{j_k}_k \}$ for some $j_k$.
What we have proven about $\bar f$ is that it satisfies that for each
$k\in L$ and each $j<j_k$ and each function $s$ from $\{i_k^0,\ldots,
i_k^{j-1}\}$ to 2, $\bar f\cup s \cup (i^j_k,0)$ forces a value on $\dot
h(i^j_k)$. 

To finish, simply repeat the process except this time choose maximal
values and work down the values in $\bar S_k$. Again, by genericity of
 $\mathfrak F$, there must be such a condition as $\bar f$ in
 $\mathfrak F$.
\end{proof}

Returning to the proof of Theorem \ref{pfatwo}, we are ready to use
Lemma \ref{homog} to show that forcing with $\Poset(\mathfrak F)$ will
not introduce undesirable functions $h$ analogous to the argument in
Theorem \ref{main}. 
Indeed, assume that we are in the case that $F$ is
a homeomorphism from $\Nstar$ to $A$ as above, and that $\{ h_\alpha :
\alpha\in \lambda\}$ is the family of functions as above. If we show
that $\dot h$ 
does not satisfy that $h_\alpha\subset^* \dot h$ for each
$\alpha\in \lambda$, then we proceed just as in Theorem \ref{main}. By
Lemma \ref{homog}, we have the condition $f_0\in \mathfrak F$ and the
sequence $S_k$ ($k\in \Naturals$) such that $\Naturals \setminus
\dom(f_0) = \bigcup_k S_k $ and that
 for each $i\in \bigcup_k S_k$, $f_0\cup \{(i,0)\}$ forces a value (call
 it $\bar h(i)$) on $\dot h(i)$. Therefore, $\bar h$ is a function
 with domain $\bigcup_k S_k$ in
 $V$.   It suffices to find a condition in $\Poset$ below $f_0$ 
which forces that there is some $\alpha$ such that $h_\alpha $ is
not extended by $\dot h$.  It is useful to note that if
$Y\subset \bigcup_k S_k$ is such that $\limsup |S_k\setminus Y|$ is
infinite, then for any function $g\in 2^Y$, $f_0\cup g\in \Poset$.

We first check that $\bar h$ is 1-to-1 on a cofinite
 subset. If not, there is an infinite set of pairs $E_j\subset
 \bigcup_k \bar  S_k$,  $\bar h[E_j]$ is a singleton and
such that for each $k$, $\bar S_k\cap
 \bigcup_j E_j$ has at most two
 elements. If $g$ is the function with $\dom(g)=\bigcup_j E_j$ 
which is constantly 0, then $f_0\cup g$ forces that $\dot h$ agrees
with $\bar h$ on $\dom(g)$ and so is not 1-to-1.
On the other hand,  this contradicts that
there is
$f_1 < f_0\cup g$
such that for some $\alpha\in \omega_2$, $a_\alpha$ 
almost contains $(f_0\cup g)^{-1}(0)$
and the
1-to-1 function $h_\alpha$ with domain $a_\alpha$
is supposed to also agree with $\dot h$ on $\dom(g)$. 

But now that we know that $\bar h$ is 1-to-1 
we may choose any $f_1\in
\mathfrak F $ such that $f_1<f_0$ and 
such that there is an $\alpha\in
\omega_2$ with $f_0^{-1}(0)\subset a_\alpha\subset f^{-1}_1(0)$,
and $f_1$ has decided the function $h_\alpha$.
Let $Y$ be any infinite subset of $\Naturals\setminus \dom(f_1)$
which meets each
$\bar S_k$ in at most a single point. If $\bar h[Y]$ meets
 $Z_\alpha$ in an infinite set, then
 choose $f_2<f_1$ so that $f_2[Y]=0$ and there is a $\beta>\alpha$
such that $Y\subset a_\beta$. In this case we will have that
$f_2$ forces that $Y\subset a_\beta\setminus a_\alpha$,
$\dot h\restriction Y \subset^* h_\beta$, and $h_\beta[Y]\cap 
h_\beta[a_\alpha]$ is infinite (contradicting that $h_\beta$ is 1-to-1).
Therefore we must have that $\bar h[Y]$ is almost disjoint from
$Z_\alpha$. Instead consider $f_2<f_1$ so that $f_2[Y]=1$. 
By extending $f_2$ we may assume that there is a $\beta<\omega_2$ such
that $f_2\forces{\Poset(\mathfrak  F)}{Z_\beta \cap \bar h[Y] 
\ \mbox{is infinite}}$. However,
since
$f_2\forces{\Poset(\mathfrak F)}{h_\beta \subset^* \dot h}$, 
 we also have that 
$f_2\forces{\Poset(\mathfrak F)}{h_\beta\restriction (a_\beta\setminus
  a_\alpha) \subset^* \bar h \ \mbox{and}\ 
(a_\beta\setminus a_\alpha)\cap Y=^*\emptyset}$
contradicting that $\bar h$ is 1-to-1 on $\dom(f_2)\setminus
\dom(f_0)$.   
This finishes the proof  that there is no $\Poset(\mathfrak F)$ name
of a function extending all the $h_\alpha$'s ($\alpha\in \lambda$) and
the proof that $F$ can not exist continues as in Theorem \ref{main}. 

Next assume that we have a family $\{c_\alpha : \alpha \in \lambda\}$
as described above and suppose that $C=\dot h^{-1}(0)$ satisfies that
(it is forced)
$C\cap a_\alpha =^* c_\alpha$ for all $\alpha\in \lambda$. If we can
show there is no such $\dot h$, then we will know that in the
extension obtained by forcing with $\Poset(\mathcal F)$, the
collection
 $\{ (c_\alpha, (a_\alpha\setminus c_\alpha)) : \alpha\in \lambda\}$ 
forms an $(\omega_1,\omega_1)$-gap and we can use a proper poset $Q_1$
to 
``freeze'' the gap. Again, meeting $\omega_1$ dense subsets of the
iteration $\omega_2^{<\omega_1}*\Poset(\mathcal F)*Q_1 * Q_2$ (where
$Q_2$ is the $\sigma$-centered poset as in Theorem \ref{main})
introduces a condition $f\in \Poset$ which forces that $c_\lambda$
will not exist.  So, given our name $\dot h$, we repeat the steps
above up to the point where we have $f_0$ and the sequence $\{ S_k :
k\in \Naturals\}$ so that $f_0\cup \{(i,0)\}$ forces a value $\bar
h(i)$ on $\dot h(i)$ for each $i\in \bigcup_k S_k$ and
$\Naturals\setminus \dom(f_0) = \bigcup_k S_k$. Let $Y=\bar h^{-1}(0)$ 
and $Z=\bar h^{-1}(1)$ (of course we may assume that $\bar h(i)\in 2$
for all $i$). Since $x$ is forced to be an ultrafilter, there is an 
$f_1<f_0$ such that 
$\dom(f_1)$ contains one of $Y$ or $Z$. 
If $\dom(f_1)$ contains
$Y$, then $f_1$ forces that $\dot h[a_\beta \setminus \dom(f_1)] = 1$
and so $(a_\beta\setminus \dom(f_1))\subset^* (\Naturals\setminus C)$ 
for all $\beta\in \omega_2$. While if $\dom(f_1)$ contains $Z$, 
then $f_1$ forces that $\dot h[a_\beta \setminus \dom(f_1)]=0$,
 and so $(a_\beta\setminus \dom(f_1))\subset^* C$  for all
$\beta\in \omega_2$. However, taking $\beta$ so large that
each of $c_\beta\setminus \dom(f_1)$ and $(a_\beta\setminus
(c_\beta\cup \dom(f_1))$ are infinite shows that no such 
$\dot h$ exists.

Finally we show that there are no involutions on $\Nstar$ 
which have a unique fixed point. Assume that $F$ is such an involution
and  that $z$ is the unique fixed point of $F$. 
Applying Corollaries \ref{maincorollary} and \ref{otherautos}, we may
assume that $\Nstar\setminus \{z\} = \bigcup_{\alpha\in \omega_2} Z_\alpha^*$
 and that for each $\alpha$, $F\restriction Z_\alpha^*$ is induced by
an involution $h_\alpha$. 

Again let $H$ be $\omega_2^{<\omega_1}$-generic, $\lambda=\omega_2^V$,
 and $\mathfrak
F\subset
\Poset_1$ be $\Poset_1$-generic over $V$. Assume that $\dot h$ is
a $\Poset(\mathcal F)$-name of a function from $\Naturals$ into
$\Naturals$. It suffices to show that no $f\in \mathfrak F$ forces
that $\dot h$ mod finite extends each $h_\alpha$ ($\alpha\in
\lambda$).  

At the risk of being too incomplete, we leave to the reader the fact
that Lemma \ref{also-useful-lemma} can be generalized to show that 
there is an $f\in\mathfrak F$ such that
either $f\forces{\Poset_1}{\dot h\restriction Z_\alpha\notin V}$, or
 there is a sequence $\{ n_k : k\in \Naturals\}$ as before. This is
 simply due to the fact that the $\Poset_1$-name
of the ultrafilter $x_1$ can be replaced by any $\Poset_1$-name of an
ultrafilter on $\Naturals$. Similarly, Lemma \ref{homog} can be
generalized in this setting to establish that there must be an $f\in
\mathfrak F$ and a sequence of sets $\{ m_k, S_k, T_k : k\in K\in 
[\Naturals]^\omega\}$
with bijections $\psi: S_k \rightarrow T_k$ such that
$S_k\subset (2^{m_k+1}\setminus 2^{m_k})\subset[n_k,n_{k+1})$, 
$T_k\subset [n_k,n_{k+1})$, 
$\Naturals\setminus \dom(f)\subset \bigcup_k S_K$,
 and
for each $k$ and $i\in S_k$ and $\bar f<f$
$\bar f$ forces a value on $\dot
h(\psi(i))$ iff $i\in\dom(\bar f)$. 
The difference here is that we may  have
that $f\forces{\Poset_1}{\dom(f)\subset Z_\alpha}$, but there will
be some values of $\dot h$ not yet decided since $V[H]$ does not have
a function extending all the $h_\alpha$'s.   
Set $\Psi = \bigcup \psi$ which is  a 1-to-1 function.

The contradiction now is that there will be some $f'<f$ such that
$f'\forces{\Poset_1}{\Psi^*(x)\neq z}$ (because we know that $x$ is
not a tie-point). Therefore we may assume that $\Psi(\dom(f')\cap 
\dom(\Psi))$ is a member of $z$ and so that $\Psi(\dom(\Psi)\setminus
\dom(f')) $ is not a member of $z$. By assumption, there is some
$\bar f<f'$ and an $\alpha\in \lambda$ such that
$\bar f\forces{\Poset_1}{\Psi(\dom(\Psi)\setminus \dom(f'))\subset
  Z_\alpha}$.  However this implies $\bar f$ forces that
$\dot h(\Psi(i)) = h_\alpha(\Psi(i))$ for almost all $i\in \bigcup_k
S_k\setminus \dom(\bar f)$, contradicting that $\bar f$ does not
force a value on $\bar h(\Psi(i))$ for all $i\notin \bar f$.

\section{questions}

\begin{question} Assume PFA. If $G$ is $\Poset_2$-generic, and $\Nstar
  = A\tie{x}B$ is the generic tie-point introduced by $\Poset_2$, is
  it true that $A$ is not homeomorphic to $\Nstar$? Is 
it true that $\tau(x)=2$? Is it true that each tie-point is a
symmetric tie-point?
\end{question}

\begin{remark} The tie-point $x_3$ 
introduced by $\Poset_3$ does not satisfy
  that $\tau(x_3)=3$.  This can be seen as follows. 
For each $f\in \Poset_3$, we can partition $\min(f)$ into
 $\{ i\in \dom(f) : i< f(i)<f^2(i)\}$ and $\{i \in \dom(f) : 
 i< f^2(i) < f(i)\}$. 
\end{remark}

It seems then that the tie-points $x_\ell$
introduced by $\Poset_\ell$ might be
better characterized by the property that there is an
autohomeomorphism $F_\ell$ of $\Nstar$ satisfying that
$\fix(F_\ell)=\{x_\ell\}$, and each
$y\in \Nstar\setminus \{x\}$ has an orbit of size $\ell$.

\begin{remark} A small modification to the poset $\Poset_2$ will 
result in a tie-point $\Nstar = A\tie{x}B$ such that $A$ (hence the
quotient space by the associated involution) is homeomorphic to
$\Nstar$. The modification is to build into the conditions a map
from the pairs $\{i,f(i)\}$ into $\Naturals$. A natural way to do this
is the poset $f\in \Poset_2^+ $ if $f$ is a 2-to-1 function such that
for each $n$, $f$ maps $\dom(f)\cap (2^{n+1}\setminus 2^n)$ into 
$2^{n}\setminus 2^{n-1}$, and again
$\limsup_{n} |2^{n+1}\setminus (\dom(f)\cup
2^n)|=\infty$. $\Poset_2^+$ is ordered by almost containment.
 The generic
filter introduces an $\omega_2$-sequence $\{ f_\alpha : \alpha \in
\omega_2\}$ and two ultrafilters:
 $x\supset \{\Naturals \setminus \dom(f_\alpha) :
\alpha \in \omega_2\}$ 
and $z\supset \{ \Naturals \setminus \ran(f_\alpha) : 
\alpha\in \omega_2\}$. 
For each $\alpha$ and $a_\alpha = \min(f_\alpha) = \{ i\in
\dom(f_\alpha) : i = \min( f_\alpha^{-1}(f_\alpha(i))\}$, we set
$A=\{x\}\cup \bigcup_{\alpha} a_\alpha^*$ and $B=\{x\}\cup
\bigcup_\alpha (\dom(f_\alpha)\setminus a_\alpha)^*$, and we have
that $\Nstar =A\tie{x}B$ is a symmetric tie-point. Finally, we
have that $F : A \rightarrow \Nstar$ defined by $F(x)=z$
and $F\restriction A\setminus \{x\} = 
\bigcup_\alpha (f_\alpha)^*$
is a homeomorphism.
\end{remark}

\begin{question} Assume PFA.
If $L$ is a finite subset of $\Naturals$ and
 $\Poset_L = \Pi\{ \Poset_\ell : \ell\in L\}$,
is it true that in 
$V[G]$ that if  $x$ is tie-point, then
 $\tau(x)\in L$; and if $1\notin L$, then every tie-point is a
 symmetric tie-point?
\end{question}

\end{document}